\definecolor{labelkey}{rgb}{0,0,1}
\definecolor{Red}{rgb}{0.7,0,0.1}
\definecolor{Green}{rgb}{0,0.7,0}
\numberwithin{equation}{section}
\newtheorem{Thm}{Theorem}[section]
\newtheorem{Lem}[Thm]{Lemma}
\newtheorem{Rmk}[Thm]{Remark}
\newtheorem*{Thm*}{Theorem}
\newcommand{\vect}[1]{\mathbf{#1}}
\newcommand{\bu}{\vect{u}}
\newcommand{\bv}{\vect{v}}
\newcommand{\bx}{\vect{x}}
\newcommand{\bbu}{\bar{\mathbf{u}}}
\newcommand{\tbu}{\widetilde{\mathbf{u}}}
\newcommand{\bw}{\vect{w}}
\newcommand{\bbf}{\vect{f}}
\newcommand{\NN}{\mathbb{N}}
\newcommand{\ZZ}{\mathbb{Z}}
\newcommand{\RR}{\mathbb{R}}
\newcommand{\no}[2]{\lVert#2\rVert_{#1}}
\newcommand{\goesto}{\rightarrow}
\newcommand{\smod}{\setminus}
\newcommand{\al}{\alpha}
\newcommand{\be}{\beta}
\newcommand{\de}{\delta}
\newcommand{\De}{\Delta}
\newcommand{\gam}{\gamma}
\newcommand{\eps}{\epsilon}
\newcommand{\lam}{\lambda}
\newcommand{\kap}{\kappa}
\newcommand{\tht}{\theta}
\newcommand{\om}{\omega}
\newcommand{\Om}{\Omega}
\newcommand{\ph}{\varphi}
\newcommand{\bdy}{\partial}
\newcommand{\lb}{\langle}
\newcommand{\rb}{\rangle}
\newcommand{\rone}[1]{#1}
\newcommand{\rtwo}[1]{#1}
\newcommand{\add}[1]{#1}
 \title[Remarks on the stabilization of large scales in 2D KSE]{Remarks on the stabilization of large-scale growth in the 2D {\rone{Kuramoto--Sivashinsky}} equation}
 \author{Adam Larios and Vincent R. Martinez$^\dagger$}
\begin{document}

\begin{abstract}
In this article, \add{some} elementary observation\add{s} \add{are} made regarding the behavior of solutions to the two-dimensional curl-free Burgers equation which suggest the distinguished role played by the scalar divergence field in determining the dynamics of the solution. These observations inspire a new divergence-based regularity condition for the two-dimensional {\rone{Kuramoto--Sivashinsky}} equation (KSE) that provides conceptual clarity to the nature of the potential blow-up mechanism for this system. The relation of this regularity criterion to the {\rone{{\rone{Ladyzhenskaya--Prodi--Serrin}}}}-type criterion for the KSE is also established, thus providing the basis for the development of an alternative framework of regularity criterion for this equation based solely on the low-mode behavior of its solutions. The article concludes by applying these ideas to identify a conceptually simple modification of KSE that yields globally regular solutions, as well as {\rone{providing}} a straightforward verification of this regularity criterion to establish global regularity of solutions to the 2D {\rone{Burgers--Sivashinsky}} equation. The proofs are direct, elementary, and concise.
\end{abstract}

\maketitle

\vspace{1em}

{\noindent \small {\it {\bf Keywords:} 2D {\rone{Kuramoto--Sivashinsky}} equation, curl-free Burgers equation, global regularity, regularity criterion, low-mode regularity criterion, {\rone{{\rone{Ladyzhenskaya--Prodi--Serrin}}}}}
   \\
  {\it {\bf MSC 2010 Classifications:} 35Q35, 35A01, 35K25, 35K52, 35K58, 35B65, 35B10, 65M70} 
  }

\setcounter{tocdepth}{1}

\section{Introduction}\label{sect:intro}

The Kuramoto-Sivashinky equation was originally introduced in \cite{Sivashinsky_1980_stoichiometry} as a model for the propagation of flame fronts and identified in connection with reaction-diffusion systems in \cite{Kuramoto_1978,Kuramoto_Tsuzuki_1975,Kuramoto_Tsuzuki_1976}, motivated by the stability of traveling waves. Numerical studies such as \cite{Michelson_Sivashinsky_1977_numerical} indicated chaotic dynamics of the system whose complexity increases as \add{the length of the domain increases}. When $d=1$, it was proved in \cite{Nicolaenko_Scheurer_1984} (see also \cite{Tadmor_1986}) that the initial value problem \eqref{eq:KSE:scalar} is globally well-posed in $H^2(\Om)$. Soon after, it was then shown in \cite{Nicolaenko_Scheurer_Temam_1985} that \eqref{eq:KSE:scalar} possessed finitely many determining modes and a finite-dimensional global attractor. In the celebrated work \cite{Foias_Nicolaenko_Sell_Temam_1988}, it was furthermore shown that the long-time behavior of \eqref{eq:KSE:scalar} was completely characterized by a finite and low-dimensional system. Since these seminal works, the KSE has become an important test bed \add{for} probing the connections between PDEs, dynamical systems, chaotic behavior, and turbulence \cite{Hyman_Nicolaenko_1986, Hyman_Nicolaenko_Zaleski_1986}, finding effective computational means for studying the dynamics of infinite-dimensional systems exhibiting such a strong form of finite-dimensional behavior (see, e.g., \cite{Jolly_Kevrekidis_Titi_1991, Foias_Jolly_Kevrekidis_Titi_1994_KSE, Johnson_Jolly_Kevrekidis_1997, Jolly_Rosa_Temam_2000, Johnson_Jolly_Kevrekidis_2001, Dieci_Jolly_Rosa_VanVleck_2008, Bezia_Mabrouk_2019} and the references therein), and \add{in some recent works} for the testing of model discovery, techniques in data assimilation,  and parameter estimation techniques \cite{Larios_Pei_2017_KSE_DA_NL,Lunasin_Titi_2015,Pachev_Whitehead_McQuarrie_2021concurrent,mojgani2022discovery}.

The (non-dimensionalized) Kuramoto-Sivashinky equation is given by
    \begin{align}\label{eq:KSE:scalar}
        \bdy_t\phi+{\add{\frac{1}2}}|\nabla\phi|^2=-\De^2\phi-\lam\De\phi,
    \end{align}
where $\phi:\RR^2\goesto\RR^2$, $\lam>0$. Upon applying the gradient operator to \eqref{eq:KSE:scalar}, one obtains the corresponding vector-form of \eqref{eq:KSE:scalar}:
    \begin{align}\label{eq:KSE}
        \bdy_t\bu+\bu\cdotp\nabla\bu=-\De^2\bu-\lam\De\bu,\quad \nabla^\perp\cdotp\bu=0,
    \end{align}
{\rone{where $\bu:\RR^2\goesto\RR^2$, $\nabla^\perp=(-\bdy_2,\bdy_1)$, $\De=\bdy_1^2+\bdy_2^2$, and $\De^2=(\bdy_1^2+\bdy_2^2)^2$}}. We note that \eqref{eq:KSE:scalar}, \eqref{eq:KSE} represent a particular choice of re-scaling of the original system, so that the parameters characterizing the domain size are ultimately encoded in $\lam$. Indeed, if $L$ is the side-length of a square domain, then under a particular choice of re-scaling, where $L$ is order $1$, one has $\lam\sim L^2$. In this article, we are interested in the issue of global regularity for \eqref{eq:KSE} over the periodic box $\Om=[0,2\pi]^d$, where $d\geq1$.

In spite of the developments mentioned above, two important issues remain unresolved. In the case $d=1$, numerical evidence \cite{Fantuzzi_Wynn_2015, Goluskin_Fantuzzi_2019,Pomeau_Pumir_Pelce_1984, Wittenberg_Holmes_1999} shows that the average energy, ${\rone{\limsup_{t\goesto\infty}\frac{1}t\int_0^t\|\bu(t)\|_{L^2}^2dt}}$, is an \textit{intensive quantity}, namely, that it obtains a bound independent of $\lam$ as $\lam\goesto\infty$, consistent with the viewpoint of thermodynamic perspective of viewing \eqref{eq:KSE} as a ``large'' system as $\lam\goesto\infty$ with spatially localized interacting subsystems that allow for short-time decorrelated interactions. This has yet to be confirmed rigorously, although some progress has been made \cite{Bronski_Gambill_2006, Collet_Eckmann_Epstein_Stubbe_1993_Analyticity, Giacomelli_Otto_2005_KSE, Goldman_Josien_Otto_2015, Goodman_1994,Nicolaenko_Scheurer_Temam_1985, Otto_2009} with the best-known uniform-in-time bounds on the energy obtained as $o(\lam)$ \cite{Giacomelli_Otto_2005_KSE} and best-known time-average bounds obtained as $O(\lam^{(1/3)^+})$
\cite{Goldman_Josien_Otto_2015, Otto_2009}. On the other hand, in higher-dimensions $d\geq2$, the issue of global existence of strong solutions for arbitrary large, finite energy initial data is still not known. It is this latter issue that the present article is concerned with. 

Local-in-time existence of analytic solutions was established in \cite{Biswas_Swanson_2007_KSE_Rn}, while the analog of the {\rone{Ladyzhenskaya--Prodi--Serrin}} regularity criterion, originally developed for the 3D {\rone{Navier--Stokes}} equations, was established for the $d-$dimensional KSE in \cite{Larios_Rahman_Yamazaki_2021_JNLS_KSE_PS}. Several  works \cite{Ambrose_Mazzucato_2018, Ambrose_Mazzucato_2021, Benachour_Kukavica_Rusin_Ziane_2014_JDDE_2DKSE, Kukavica_Massatt_2023} have constructed global solutions by exploiting anisotropy in some way.  Another approach to understanding the issue of global regularity has been to identify various mechanisms such as maximum principles \cite{Ibdah_2021_MichelsonSivashinsky, Larios_Yamazaki_2020_rKSE, Massatt_2022_DCDSB}, mixing \cite{CotiZelati_Dolce_Feng_Mazzucato_2021}, dispersion \cite{Bartuccelli_Deane_Gentile_2020_JDDE}, ``algebraic calming'' \cite{Enlow_Larios_Wu_2023_calming} that ultimately stabilize large-scale growth in the system and then to either modify \eqref{eq:KSE:scalar} to possess these mechanisms or to locate them in closely related systems which have these mechanisms naturally.

A major aim of the present work is to highlight the role that the sign of the divergence of the solution plays in the (possible) destabilization of the large-scales in the system.  In \cite{Larios_Rahman_Yamazaki_2021_JNLS_KSE_PS}, it was shown (among other results) that bounding various norms of the divergence is sufficient to prove that the $d$-dimensional KSE is globally well-posed, although this criterion was probably observed at least informally by other researchers earlier.  In the present work, we prove that it is sufficient to control merely the positive part of the divergence, which seems not to have been observed earlier, although some recent numerical experiments in \cite{Larios_Rahman_Yamazaki_2021_JNLS_KSE_PS} had hinted at this.  The reason for this can be understood dynamically as follows: In the KSE, small scales are stabilized by the hyperdiffusive mechanism, while large scales (smaller than order $\sqrt{\lambda}$) are destabilized by the backward diffusive mechanism\footnote{It was first observed by E.S. Titi \cite{EdrissTitiPrivateCommunication} that in 1D at least, the equation is stabilized by the nonlinear cascading energy from large scales to small scales.}.  In higher dimensions ($d\geq2$), we postulate the following destabilizing mechanism: Regions of positive divergence tend to expand (since the vector field ``points outward'' {\add{in these regions}}).  Since the direction of the {\add{local}} expansion is in the same direction as the solution $\bu$, the nonlinear term $\bu\cdot\nabla\bu$ maintains this expansion, at least for short times, by advecting the velocity field along this expansion direction, further increasing the divergence in these regions.  This feedback mechanism creates large bubble-like regions of positive divergence\footnote{These regions were observed and explicitly plotted in computational simulations in \cite{Larios_Rahman_Yamazaki_2021_JNLS_KSE_PS}, but hints of this can be seen earlier in \cite{Kalogirou_Keaveny_Papageorgiou_2015}, and even as far back as \cite{Sivashinsky_1980_stoichiometry}.  Note that such a dominance of positive divergence regions over negative divergence regions is \textit{not} observed in 1D simulations; this can be explained by the nonlinear term vanishing in $L^2$ energy estimates, which occurs only in dimension $1$.}.  These regions are naturally associated with large wave-lengths, and hence the low-mode instability of the equation causes these regions to grow rapidly. Since the average of the divergence over a periodic domain must be zero, there will always be some regions of negative divergence, but these regions will be thin\footnote{Regions of negative divergence must either be widespread (e.g., many thin filaments) or have large amplitude of divergence to compensate; computational evidence in \cite{Larios_Rahman_Yamazaki_2021_JNLS_KSE_PS} (see Fig. 7 in that work) indicate it is the latter.} as regions of positive divergence compete for dominance, and hence small-scales are also activated via this {\add{upwelling-type behavior}}.  Thus, there is some hope for global well-posedness via the ``Titi mechanism'' of stabilization via energy cascading to higher modes where it can be dissipated. However, this cascade needs to be strong enough to counter-balance the growth of positive-divergence regions.  We make these heuristic arguments more concrete in the present article by showing that if the divergence is bounded, or  \add{its} positive part \add{is bounded}, or \add{its behavior on} low-modes of the divergence is under control, {\add{in a precise sense that we specify below}}, then the system is globally well-posed.

{\add{We}} develop several new regularity criteria (\cref{thm:div}, \cref{thm:div:low}, \cref{thm:N}) that improve some of the criteria developed in \cite{Larios_Rahman_Yamazaki_2021_JNLS_KSE_PS}. The main motivation for each of these regularity criteria is based on a simple observation regarding solutions of the 2D curl-free Burgers equation (\cref{sect:Burgers}) in conjunction with the prevailing belief that one need only control low-mode instabilities to prevent blow-up. We introduce an elementary regularity criterion consistent with the observations regarding the role of the divergence of the vector field. This criterion is subsequently refined by making use of a beautiful idea introduced in \cite{Cheskidov_Shvydkoy_2014}. There, a unified approach to regularity of the 3D {\rone{Navier--Stokes}} equations was developed based on Kolmogorov's phenomenological theory of turbulence by exploiting the presence of a viscous cut-off in the energy spectrum; these ideas are applied in an analogous fashion in the context of KSE due to the presence of hyperviscosity, but to produce a decidedly different quantity from the context considered in \cite{Cheskidov_Shvydkoy_2014} for controlling regularity of solutions.  We then develop the ideas encapsulated by our regularity criterion to propose a different modification of the vector-formulation \eqref{eq:KSE} of KSE (\cref{sect:castrate}), as well as provide a simple demonstration of our regularity criteria in the particular case of the {\rone{Burgers--Sivashinsky}} model (\cref{sect:BSE}). Several insights are drawn from these results which are captured in various remarks, revealing several interesting future directions to pursue (\cref{rmk:EZKSE}, \cref{rmk:sharpness}, \cref{rmk:restrict}, \cref{rmk:control}). Finally, we summarize our paper and make some concluding remarks in \cref{sect:conclude}.

\section{Notation}

Given real numbers $a,b$, we define $a\wedge b=\min\{a,b\}$ and $a\vee b=\max\{a,b\}$. For $p\in[1,\infty)$, let $L^p=L_0^p(\Om)$ denote the space of $p$-integrable scalar fields which are $2\pi$-periodic in each direction, equipped with the norm
    \begin{align}\label{def:Lp}
        \no{L^p}{\ph}:=\left(\int_\Om|\ph(x)|^pdx\right)^{1/p},
    \end{align}
with the usual modification when $p=\infty$. We denote the $L^2$-based Sobolev space of order $k\geq0$, where $k$ is an integer, by $H^k=H^k(\Om)$, equipped with the norm
    \begin{align}\label{def:Hk}
        \no{H^k}{\ph}^2:=\no{L^2}{\ph}^2+\sum_{|\al|=k}\no{L^2}{\bdy^\al\ph}^2,
    \end{align}
where $\al\in(\mathbb{N}\cup\{0\})^2$ denotes a multi-index. Throughout the article, we will abuse notation by letting $L^p$, $H^k$ denote the same spaces but with vector-valued outputs. We will also make use of fractional homogeneous Sobolev spaces, $\dot{H}^\kap$, for $\kap\in\RR$. These spaces can be characterized in terms of the fractional Laplacian $D^\kap:=(-\De)^{\kap/2}$, which is defined via its Fourier transform:
    \begin{align}\label{def:Dkap}
        \widehat{D^\kap\ph}_\ell=|\ell|^{\kap}\hat{\ph}_\ell,
    \end{align}
where $\hat{\ph}_\ell=(2\pi)^{-2}\int_{\Om}\ph(x)e^{-i\ell\cdotp x}dx$, for each $\ell\in\ZZ^2$. The norm characterizing $\dot{H}^\kap$ is then defined by
    \begin{align}\label{def:Hkap}
        \no{\dot{H}^\kap}{\ph}:=\no{L^2}{D^\kap\ph}.
    \end{align}
Note that by the Plancherel identity,
    \begin{align}\label{eq:Hk:Hkap}
        \no{\dot{H}^\kap}{\ph}^2=(2\pi)^2\sum_{\ell\in\ZZ^2\smod\{\mathbf{0}\}}|\ell|^{2\kap}|\hat{\ph}_\ell|^2.
    \end{align}
We therefore define for $\kap\in\RR$:
	\begin{align}\label{def:Hkap:negative}
		\no{H^\kap}{\ph}:=\left(\no{L^2}{\ph}^2+\no{\dot{H}^\kap}{\ph}^2\right)^{1/2}.
	\end{align}
Observe that for $\kap=k$, the following quantities are equivalent (as norms):
	\begin{align}\label{eq:Hk:consistency}
		\no{H^k}{\ph}\sim\left(\no{L^2}{\ph}^2+\no{\dot{H}^\kap}{\ph}^2\right)^{1/2}.
	\end{align}
In particular, the definitions \eqref{def:Hk} and \eqref{def:Hkap} are consistent whenever $\kap$ is an integer. It is also readily verifiable that for all $\kap\in\RR$, we have the following equivalence:
	\begin{align}\label{def:Hkap:negative:equiv}
		\no{H^\kap}{\ph}\sim\left((2\pi)^2\sum_{\ell\in\ZZ^2}(1+|\ell|^2)^{\kap}|\hat{\ph}_\ell|^2\right)^{1/2}.
	\end{align}

Lastly, we observe that in the subspace of mean-free, $2\pi$-periodic functions (in each direction) over $\Om$, we may identify $H^\kap$ with $\dot{H}^\kap$. \textit{For the remainder of the manuscript, we will therefore adopt the abuse of notation that $H^\kap$ also denotes the homogeneous space whenever we are in the context of mean-free functions.}

We will make use of several inequalities. Let us recall the {\rone{Kato--Ponce}} inequality (see, e.g., \cite{Li_2019_Kato_Ponce,Grafakos_Oh_2014_Kato_Ponce} and the references therein): given $\kap\in(0,\infty)$, $r\in(1,\infty)$, and $1< p_j,q_j<\infty$ such that $1/p_j+1/q_j=1/r$, there exists an absolute constant $C_{KP}=C({\rone{\kappa}},r,p_1,q_1,p_2,q_2)>0$ such that
    \begin{align}\label{est:kato:ponce}
        \no{L^r}{D^\kap(\ph_1\ph_2)}\leq C_{KP}\left(\no{L^{p_1}}{D^\kap \ph_1}\no{L^{q_1}}{\ph_2}+\no{L^{p_2}}{\ph_1}\no{L^{q_2}}{D^\kap \ph_2}\right).
    \end{align}

If $f=(I-P_N)f$, for some $N>0$, we have the following Bernstein-type inequalities, which are straight-forward to prove: for real numbers $\kap<\kap'$, we have
    \begin{align}\label{est:bernstein}
    	\begin{split}
    	\no{H^{\kap'}}{P_N\ph}&\leq N^{\kap'-\kap}\no{L^2}{D^\kap \ph},\quad \kap\geq0
	\\
        \no{H^{\kap}}{(I-P_N)\ph}&\leq N^{-(\kap'-\kap)}\no{L^2}{D^{\kap'}\ph},\quad\kap'\geq0.
        \end{split}
    \end{align}
We point out that when $N\geq1$, then $(I-P_N)\ph$ is mean-free. We will also make use of the standard interpolation inequality in Sobolev spaces: given any real numbers $\kap_1<\kap<\kap_2$, we have
    \begin{align}\label{est:interpolation}
        \no{L^2}{D^\kap\ph}\leq \no{H^{\kap_2}}{\ph}^{\frac{\kap-\kap_1}{\kap_2-\kap_1}}\no{H^{\kap_1}}{\ph}^{\frac{\kap_2-\kap}{\kap_2-\kap_1}}.
    \end{align}

Lastly, we recall the Sobolev Embedding Theorem (see, e.g., \cite{Benyi_Oh_2013_Sobolev} for the fractional version on the torus): given $p\in(2,\infty)$ and real number $\kap\geq0$ such that $1/p=1/2-\kap/2$, there exists a constant $C_S=C(p,\kap)$ such that
    \begin{align}\label{est:sobolev}
        \no{L^p}{\ph}\leq C_S\no{H^\kap}{\ph}.
    \end{align}

\section{2D curl-free Burgers equation}\label{sect:Burgers}

It is well-known that the main obstacle to global well-posedness of the $d$-dimensional KSE ($d\geq2$) is in obtaining bounds on {\rone{the}} energy $\frac{1}2\no{L^2}{\bu}^2$. A similar equation in which the same obstacle is present is the $d$-dimensional hyper-dissipative Burgers equation 
    \begin{align}\label{eq:Burgers:hyper}
        \bdy_t\bu+\bu\cdotp\nabla\bu=-(-\De)^{\gam}\bu.
    \end{align}
where $\gam>1$ (this was first observed in \cite{Larios_Titi_2015_BC_Blowup} with $\gam=2$). When $\gam=1$, \eqref{eq:Burgers:hyper} is simply the viscous Burgers equation and global regularity of solutions is known due to the availability of an $L^\infty$-maximum principle (supposedly first proved in \cite{Ladyzhenskaya_1968}, but see the discussion and a modern proof in \cite{Pooley_Robinson_2016}). However, when $\gam>1$, a maximum principle is not known to exist for either \eqref{eq:Burgers:hyper} or \eqref{eq:KSE}. Needless to say, if such a property were available, then global regularity would follow.

One feature to recognize that is common to both \eqref{eq:KSE} and \eqref{eq:Burgers:hyper} is the underlying presence of the inviscid Burgers equation, i.e., \eqref{eq:Burgers:hyper} when $\lam=0$. We momentarily reflect on this presence and its implications in the issue of the global regularity of \textit{curl-free} solutions to \eqref{eq:KSE} and \eqref{eq:Burgers:hyper}. In particular, let us consider the 2D inviscid Burgers equation, that is,
    \begin{align}\label{eq:Burgers}
        \bdy_t\bu+\bu\cdotp\nabla\bu=0,
    \end{align}
It is easy to see that if $\bu$ is smooth and initially curl-free, then it remains curl-free. Indeed, suppose that $\bu$ satisfies \eqref{eq:Burgers}. We introduce the variables
    \begin{align}\label{def:div:curl}
        \de=\nabla\cdotp\bu,\quad\om=\nabla^\perp\cdotp\bu.   
    \end{align}
We also denote
    \begin{align}\label{def:D}
        D_t=\bdy_t+\bu\cdotp\nabla.
    \end{align}
Then a straightforward computation shows that
    \begin{align}\label{eq:Burgers:vort}
       D_t\om+\de\om=0.
    \end{align}
Hence $\om(t,X(t;a))=\exp\left(\add{-}\int_0^t\de(s,X(s,a))ds\right)\om(0,a)$, for all $a\in\Om$, where $X(t;a)$ denotes the Lagrangian particle position of $a$ at time $t$:
    \begin{align}\label{eq:particle}
        \frac{d}{dt}X(t;a)=\bu(t,X(t;a)),\quad X(0;a)=a.    
    \end{align}
Thus, if $\om(0,\cdotp)\equiv0$, then $\om(t,\cdotp)\equiv0$. For this reason, when \eqref{eq:Burgers} is initialized with a curl-free vector-field, we refer to \eqref{eq:Burgers} as the \textit{curl-free Burgers equation}. For the remainder of the section, we will thus assume that $\nabla^\perp\cdotp\bu=0$ holds.

On the other hand, upon applying the divergence operator to \eqref{eq:Burgers} and invoking the curl-free property\footnote{The Leibniz product rule directly gives $\nabla\cdot(\bu\cdot\nabla\bu) = (\nabla\bu) : (\nabla\bu)^T+ \bu\cdot\nabla\delta$, and the curl-free property implies $\nabla\bu = (\nabla\bu)^T$.}, one obtains
    \begin{align}\label{eq:Burgers:div}
        D_t\de=-|\nabla\bu|^2.
    \end{align}
This shows that $\de$ \textit{always decreases} along Lagrangian trajectories. In particular, if a particle initially possesses {\rone{negative divergence}}, then it will remain negative throughout its evolution. Although the additional linear terms in \eqref{eq:KSE} may, in general, counteract this phenomenon, we take this elementary observation as an underlying motivation for the divergence-based regularity criterion that we develop for \eqref{eq:KSE}.

\begin{Rmk}\label{rmk:EZKSE}
In contrast to the curl-free Burgers equation, the vorticity form of the three-dimensional Euler equations asserts the transport of vorticity along the fluid's velocity field and stretching of vorticity via the velocity's gradient:
	\begin{align}\label{eq:Euler}
		D_t\boldsymbol{\om}=\boldsymbol{\om}\cdot\nabla \bu
	\end{align}
As we will see below, the role of the divergence in the irrotational setting will play a role analogous to the vorticity in the incompressible setting from the point of view of regularity in the context of the 2D {\rone{Kuramoto--Sivashinsky}} equation \eqref{eq:KSE}. 
\end{Rmk}

\begin{Rmk}\label{rmk:Boritchev}
{\add{We also refer the reader to the paper of A. Boritchev \cite{Boritchev2014}, wherein a stochastic analog of the curl-free \textit{viscous} Burgers equation is analyzed from the perspective of hydrodynamic turbulence and bounds on averages of increments and energy spectra are obtained uniformly in the viscosity parameter.}}
\end{Rmk}

\section{2D {\rone{Kuramoto--Sivashinsky}} equation}\label{sect:KSE}

We consider \eqref{eq:KSE} and first study how the divergence of the vector field controls the growth solutions in any Sobolev norm. In fact, we show that the amplitude of outward divergence can affect growth of norms, while regions of inward divergence stabilizes, consistent with the observations from \cref{sect:Burgers}.

We will make use of the following local existence, regularity, and continuation result.

\begin{Thm}\label{thm:exist}
Let $\bu_0\in L^2$ {\add{such that $\nabla^\perp\cdotp\bu_0=0$ in the sense of distribution}}. Then there exists $T_0=T_0(\no{L^2}{\bu_0})>0$ and a unique solution $\bu\in C_w([0,T_0];L^2))\cap L^2(0,T_0;H^2)$ to \eqref{eq:KSE}. Moreover, for a.e. $t_0\in(0,T_0)$ and $k\geq1$, one has $\bu\in C([t_0,T_0];H^k)$ such that
    \begin{align}\notag
        \sup_{t\in[t_0,T_0]}\no{H^k}{\bu(t)}\leq C(t_0, T_0,\no{L^2}{\bu_0}).
    \end{align}
In particular, $u(t)\in C^\infty$, for all $t\in(0,T_0]$.
Lastly, if $T^*$ denotes the maximal time of existence and $\sup_{t\in[0, T^*)}\no{L^2}{\bu(t;\bu_0)}<\infty$, then $T^*=\infty$.
\end{Thm}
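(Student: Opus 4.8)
The plan is to follow the standard Galerkin-plus-energy-estimates paradigm, with the extra twist that the curl-free constraint must be shown to be preserved. First I would set up a Galerkin approximation $\bu_m = P_m \bu_m$ solving the projected system $\bdy_t \bu_m + P_m(\bu_m\cdotp\nabla\bu_m) = -\De^2\bu_m - \lam\De\bu_m$, which is a locally-in-time solvable ODE system; the curl-free property $\nabla^\perp\cdotp\bu_m=0$ is preserved because applying $\nabla^\perp\cdotp$ to the projected equation and using that $P_m$ commutes with derivatives gives $\bdy_t(\nabla^\perp\cdotp\bu_m) = -\De^2(\nabla^\perp\cdotp\bu_m)-\lam\De(\nabla^\perp\cdotp\bu_m) + (\text{nonlinear term})$, and the nonlinear term, written via $\nabla^\perp\cdot(\bu\cdot\nabla\bu)$, vanishes identically once $\nabla\bu = (\nabla\bu)^T$, exactly as in the footnote computation for the divergence. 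Hence $\nabla^\perp\cdotp\bu_m \equiv 0$ for all $m$, and it passes to the limit.

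Next I would derive the basic $L^2$ energy estimate. Pairing the equation with $\bu_m$, the nonlinear term $\langle \bu_m\cdotp\nabla\bu_m,\bu_m\rangle$ does \emph{not} vanish in dimension $d\geq 2$ (this is precisely the obstruction alluded to in \cref{sect:Burgers}), so one writes it as $\tfrac12\int \bu_m\cdotp\nabla|\bu_m|^2 = -\tfrac12\int (\nabla\cdotp\bu_m)|\bu_m|^2$ and absorbs it: using $\no{\dot H^2}{\bu_m}^2$ on the right from $-\langle\De^2\bu_m,\bu_m\rangle$, the backward term $-\lam\langle\De\bu_m,\bu_m\rangle = \lam\no{\dot H^1}{\bu_m}^2$ is controlled by interpolation $\lam\no{\dot H^1}{\bu_m}^2 \leq \tfrac14\no{\dot H^2}{\bu_m}^2 + C\lam^2\no{L^2}{\bu_m}^2$, and the cubic term is estimated by, e.g., a Sobolev/interpolation bound $\tfrac12\|\nabla\cdotp\bu_m\|_{L^3}\|\bu_m\|_{L^3}^2 \lesssim \no{\dot H^2}{\bu_m}\cdot\no{H^{?}}{\bu_m}^2$ and again absorbed into $\tfrac14\no{\dot H^2}{\bu_m}^2$ after a Young's inequality, leaving a differential inequality $\tfrac{d}{dt}\no{L^2}{\bu_m}^2 + \no{\dot H^2}{\bu_m}^2 \leq C\lam^2\no{L^2}{\bu_m}^2 + C\no{L^2}{\bu_m}^{p}$ for some power $p$. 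This Riccati-type inequality yields a time $T_0$ depending only on $\no{L^2}{\bu_0}$ (and $\lam$, fixed) on which $\no{L^2}{\bu_m(t)}$ stays bounded and $\bu_m$ is bounded in $L^2(0,T_0;H^2)$, uniformly in $m$. Standard Aubin--Lions compactness then extracts a limit $\bu\in C_w([0,T_0];L^2)\cap L^2(0,T_0;H^2)$ solving \eqref{eq:KSE}, and uniqueness follows by a Gr\"onwall argument on the difference of two solutions (estimating the difference of the nonlinear terms using the $L^2_tH^2_x$ regularity and an $H^2\hookrightarrow W^{1,\infty}$ type embedding in 2D, which gives log-type or polynomial factors that Gr\"onwall tolerates).

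For the instantaneous smoothing, I would run $H^k$ energy estimates bootstrapping from $H^2$: test with $D^{2k}\bu_m$, estimate the nonlinearity by the Kato--Ponce inequality \eqref{est:kato:ponce} and interpolation \eqref{est:interpolation}/Sobolev \eqref{est:sobolev}, absorb the top-order $\no{\dot H^{k+2}}{\bu_m}^2$ coming from the hyperviscosity, and obtain a differential inequality of the form $\tfrac{d}{dt}\no{H^k}{\bu_m}^2 + \no{\dot H^{k+2}}{\bu_m}^2 \leq C(\lam)(1+\no{H^2}{\bu_m}^{q})\no{H^k}{\bu_m}^2$; since $\bu_m\in L^2(0,T_0;H^2)$ we may pick $t_0$ where $\no{H^2}{\bu_m(t_0)}<\infty$ and iterate the regularity gain $k\mapsto k+1$ (the hyperviscous smoothing gains two derivatives at a time, but one derivative of gain per step suffices to climb the ladder), yielding $\bu\in C([t_0,T_0];H^k)$ for every $k$ and hence $C^\infty$ for $t>0$, with the claimed bound. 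Finally, the continuation statement is the contrapositive of blow-up: if $\sup_{[0,T^*)}\no{L^2}{\bu}<\infty$, then the local existence time $T_0(\no{L^2}{\bu_0})$ from the first step is bounded below uniformly along the trajectory, so one can restart the solution past any $t<T^*$, contradicting maximality of $T^*$ unless $T^*=\infty$. The main obstacle is the $L^2$ estimate: unlike the incompressible case, the cubic term does not vanish, so one must verify that the hyperdiffusion $\no{\dot H^2}{\bu}^2$ genuinely dominates both $\lam\no{\dot H^1}{\bu}^2$ and the cubic term $\|\delta\|_{L^3}\|\bu\|_{L^3}^2$ after interpolation — this is exactly where $d=2$ is used and where the dependence of $T_0$ on $\no{L^2}{\bu_0}$ (not merely a norm that controls more) is pinned down.
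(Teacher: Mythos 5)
Your proposal is correct and follows essentially the same route as the paper's appendix: Galerkin approximation with uniform energy estimates, an $L^2$ Riccati-type inequality (the cubic term absorbed into $\no{L^2}{\De\bu}^2$ after interpolation) yielding $T_0=T_0(\no{L^2}{\bu_0})$, higher-order $H^k$ estimates absorbed by the hyperviscosity, a bootstrap for instantaneous $C^\infty$ smoothing, and restarting from the $L^2$ bound for the continuation statement. The only cosmetic differences are that the paper estimates the nonlinearity with Ladyzhenskaya/Agmon-type inequalities rather than your $L^3$/Kato--Ponce bounds and does not need curl-free preservation for existence at all (as remarked after the theorem); if you include that step, phrase it as a Gr\"onwall estimate for the equation satisfied by $\om_m=\nabla^\perp\cdotp\bu_m$, which is linear in $\om_m$, rather than ``the term vanishes once curl-free,'' and note that $H^2\not\subset W^{1,\infty}$ in 2D, although uniqueness goes through with Ladyzhenskaya-type estimates alone.
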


In an effort to make the presentation self-contained, the relevant details of the proof of \cref{thm:exist} are supplied in the appendix (see \cref{sect:appendix}), but we nevertheless refer the reader to \cite{Molinet_2000, Biswas_Swanson_2007_KSE_Rn} for additional details (see also \cite{Feng_Mazzucato_2020,Sell_Taboada_1992}). \add{Note that the details we provide in \cref{sect:appendix} are carried out without appealing to the curl-free condition. Thus \cref{thm:exist} holds for $\bu_0\in L^2$ as well, except the corresponding solution need not satisfy the curl-free condition.} Before we proceed, let us recall that if $\bu$ were mean-free, then the curl-free condition (via the Helmholtz decomposition) implies $\bu=\nabla\phi$, for some scalar potential field $\phi$. Thus, in a mean-free setting, the vector form \eqref{eq:KSE} is consistent with the scalar form \eqref{eq:KSE:scalar}; we refer the reader to \cref{sect:div:low} for additional remarks regarding the validity of the mean-free assumption.

\subsection{A divergence-based regularity criterion}\label{sect:div}

Before stating the main result of this section, we first introduce some additional notation.  Denote the positive and negative parts of $\de=\nabla\cdotp\bu$ by $\de_-=\max\{0,-\de\}$ and $\de_+=\max\{0,\de\}$ so that $\de=\de_+-\de_-$. Then denote
    \begin{align}\label{def:div:plus:max}
        \de_+^*=\de_+^*(t):=\sup_{x\in\Om}\de_+(t,x).
    \end{align}
Our main result is then stated as follows.

\begin{Thm}\label{thm:div}
Given $\bu_0\in L^2$, let $\bu$ denote the unique smooth solution of \eqref{eq:KSE} with initial data $\bu(0)=\bu_0$ over its maximal interval of existence $(0,T^*)$. If
    \begin{align}\label{cond:div}
        \int_0^{T^*}\de_+^*(s)\ ds<\infty,
    \end{align}
then $\sup_{t\in[0,T^*)}\no{L^2}{\bu(t)}<\infty$ and, subsequently, $T^*=\infty$. Conversely, if \eqref{cond:div} fails, then either $\limsup_{t\goesto T^*-}\no{H^3}{\bu(t)}$ or $\limsup_{t\goesto T^*-}\no{H^1}{\bu(t)}$ is infinite.
\end{Thm}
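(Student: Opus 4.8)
The plan is to run an $L^2$ energy estimate on \eqref{eq:KSE} and extract the nonlinear term's contribution in terms of the divergence. Taking the $L^2$ inner product of \eqref{eq:KSE} with $\bu$, the hyperdiffusion gives $-\no{L^2}{\De\bu}^2$ and the backward-diffusion gives $+\lam\no{L^2}{\nabla\bu}^2$, which by interpolation (Young's inequality, $\no{L^2}{\nabla\bu}^2 \leq \tfrac12\no{L^2}{\De\bu}^2 + C\lam^2\no{L^2}{\bu}^2$-type bound, or rather absorbing $\lam\no{L^2}{\nabla\bu}^2 \le \tfrac12 \no{L^2}{\De\bu}^2 + C\lam^2\no{L^2}{\bu}^2$) is controlled by $\tfrac12\no{L^2}{\De\bu}^2 + C\lam^2 \no{L^2}{\bu}^2$. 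The crucial term is the nonlinearity $\langle \bu\cdot\nabla\bu,\bu\rangle$. Integrating by parts, $\int_\Om (\bu\cdot\nabla\bu)\cdot\bu\,dx = -\tfrac12\int_\Om (\nabla\cdotp\bu)|\bu|^2\,dx = -\tfrac12\int_\Om \de\,|\bu|^2\,dx$. This is the key algebraic identity: in dimension $d\geq2$ the nonlinear term does \emph{not} vanish, but it is exactly $-\tfrac12\int_\Om\de|\bu|^2$.

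Next I would bound $-\tfrac12\int_\Om \de|\bu|^2 = -\tfrac12\int_\Om (\de_+-\de_-)|\bu|^2 \leq \tfrac12\int_\Om\de_+|\bu|^2 \leq \tfrac12\de_+^*(t)\no{L^2}{\bu}^2$, discarding the favorable $\de_-$ term. Combining, one arrives at a differential inequality of the form
\begin{align}\notag
\frac{d}{dt}\no{L^2}{\bu}^2 + \no{L^2}{\De\bu}^2 \leq \left(\de_+^*(t) + C\lam^2\right)\no{L^2}{\bu}^2.
\end{align}
Gr\"onwall's inequality then gives $\no{L^2}{\bu(t)}^2 \leq \no{L^2}{\bu_0}^2\exp\left(\int_0^t(\de_+^*(s)+C\lam^2)\,ds\right)$, which under hypothesis \eqref{cond:div} is finite and bounded on $[0,T^*)$ since $C\lam^2 T^* < \infty$ would require $T^*<\infty$ — and if $T^*=\infty$ we'd already be done, while if $T^*<\infty$ then $\int_0^{T^*}(\de_+^*(s)+C\lam^2)\,ds<\infty$ by \eqref{cond:div}. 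Either way $\sup_{t\in[0,T^*)}\no{L^2}{\bu(t)}<\infty$, and then the continuation criterion of \cref{thm:exist} forces $T^*=\infty$.

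For the converse, I would argue by contraposition: suppose both $\limsup_{t\goesto T^*-}\no{H^3}{\bu(t)}$ and $\limsup_{t\goesto T^*-}\no{H^1}{\bu(t)}$ are finite. By Agmon-type interpolation in two dimensions, $\no{L^\infty}{\de} = \no{L^\infty}{\nabla\cdotp\bu} \lesssim \no{H^1}{\bu}^{1/2}\no{H^3}{\bu}^{1/2}$ (since $\nabla\cdotp\bu$ lives one derivative below $\bu$, so controlling $\bu$ in $H^1\cap H^3$ controls $\de$ in $L^\infty$ via the embedding $H^s\hookrightarrow L^\infty$ for $s>1$ interpolated between $H^0$ and $H^2$ for $\de$). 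Hence $\de_+^*(t) \leq \no{L^\infty}{\de(t)}$ would be bounded uniformly on $[0,T^*)$. If $T^*<\infty$ this immediately gives $\int_0^{T^*}\de_+^*(s)\,ds<\infty$, contradicting the failure of \eqref{cond:div}. If $T^*=\infty$, one must additionally note that finiteness of $\limsup$ does not by itself bound the time integral; here I would instead observe that the converse statement should be read as: if \eqref{cond:div} fails then the solution cannot remain smooth in these norms up to $T^*$, so really the cleanest route is: failure of \eqref{cond:div} together with the \emph{forward} implication already proven is consistent only if some higher norm blows up — more precisely, if \eqref{cond:div} fails then necessarily $T^*$ is the genuine blow-up time (else $\de$ is smooth and bounded on compact time intervals, and on $[0,\infty)$ one would need the $\limsup$ of $\de_+^*$ to be positive and the integral to diverge, which is compatible with bounded $H^1,H^3$ norms — so strictly the converse as stated presumes $T^*<\infty$, and I would phrase the proof accordingly, deducing that $T^*<\infty$ and that boundedness of both $H^1$ and $H^3$ norms on $[0,T^*)$ would make $\int_0^{T^*}\de_+^*<\infty$).

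The main obstacle is the converse direction's logical structure: one must be careful about whether $T^*$ is finite, and about which interpolation inequality cleanly bounds $\de_+^* = \|\nabla\cdotp\bu\|_{L^\infty_+}$ by a geometric mean of $\no{H^1}{\bu}$ and $\no{H^3}{\bu}$ — this requires $\nabla\cdotp\bu \in L^\infty$ controlled by $\|\de\|_{H^1}^{1/2}\|\de\|_{L^2}^{1/2}$-type Agmon inequality in 2D, i.e. $\|\de\|_{L^\infty}\lesssim \|\de\|_{L^2}^{1/2}\|D^2\de\|_{L^2}^{1/2}$, and since $\de$ has one fewer derivative than $\bu$, this is $\lesssim \no{H^1}{\bu}^{1/2}\no{H^3}{\bu}^{1/2}$, exactly matching the two norms named in the statement. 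The forward direction is routine once the identity $\langle\bu\cdot\nabla\bu,\bu\rangle = -\tfrac12\int\de|\bu|^2$ is in hand.
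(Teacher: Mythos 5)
Your forward direction is correct and is essentially the paper's own argument: the integration-by-parts identity $-\lb\bu\cdotp\nabla\bu,\bu\rb=\tfrac12\lb\de,|\bu|^2\rb$, discarding the favorable $\de_-$ contribution, bounding the $\de_+$ part by $\de_+^*\no{L^2}{\bu}^2$, absorbing the $\lam\De$ term via Cauchy--Schwarz and Young as in \eqref{est:destabilizing}, then Gr\"onwall and the continuation criterion of \cref{thm:exist}. For the converse, the paper argues directly rather than by contraposition: since $\de_+^*(t)$ is finite for $t\in(0,T^*)$, failure of \eqref{cond:div} forces $\de_+^*$ to blow up as $t\goesto T^{*-}$, and the single Sobolev bound $\de_+^*(t)\leq C\no{H^2}{\de(t)}\leq C\no{H^3}{\bu(t)}$ then yields $\limsup_{t\goesto T^{*-}}\no{H^3}{\bu(t)}=\infty$, so only the $H^3$ norm is actually needed; your contrapositive route via the Agmon-type interpolation $\no{L^\infty}{\de}\lesssim\no{H^1}{\bu}^{1/2}\no{H^3}{\bu}^{1/2}$ is equally valid and has the minor virtue of explaining why both norms appear in the statement. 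Your caveat about the case $T^*=\infty$ is fair: the paper's converse implicitly treats $T^*$ as a finite blow-up time (its proof evaluates quantities ``at $T^*$''), so reading the converse as a statement about finite $T^*$, as you do, matches the intended content, and neither your argument nor the paper's addresses integrability of $\de_+^*$ near $t=0$, which requires the instantaneous-smoothing rate from \cref{thm:exist} but is harmless in both treatments.
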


\begin{proof}
The energy balance of \eqref{eq:KSE} is given by
    \begin{align}\label{eq:energy:balance}
        \frac{1}2\frac{d}{dt}\no{L^2}{\bu}^2+\no{L^2}{\De\bu}^2=-\lb\bu\cdotp\nabla\bu,\bu\rb-\lam\lb\De\bu,\bu\rb.
    \end{align}
Observe that
    \begin{align}\label{eq:ibp}
        -\lb\bu\cdotp\nabla\bu,\bu\rb=\frac{1}2\lb \de,|\bu|^2\rb=\frac{1}2\lb\de_+,|\bu|^2\rb-\frac{1}2\lb\de_-,|\bu|^2\rb.
    \end{align}
On the other hand, by the Cauchy-Schwarz inequality and Young's inequality
    \begin{align}\label{est:destabilizing}
        \lam|\lb\De\bu,\bu\rb|\leq\lam\no{L^2}{\De\bu}\no{L^2}{\bu}\leq \frac{1}2\no{L^2}{\De\bu}^2+\frac{\lam^2}2\no{L^2}{\bu}^2.
    \end{align}
It follows that
    \begin{align}\label{est:energy:balance:div}
        \frac{d}{dt}\no{L^2}{\bu}^2+\no{L^2}{\De\bu}^2+\lb\de_-,|\bu|^2\rb\leq \left(\de_+^*+\lam^2\right)\no{L^2}{\bu}^2,
    \end{align}
where $\de^*_+$ is defined by \eqref{def:div:plus:max}. An application of Gr\"onwall's inequality then implies
    \begin{align}\label{est:L2:div}
        \no{L^2}{\bu(t)}^2
        \leq \exp\left(\lam^2t+\int_0^t\de_+^*(s)\ ds\right)\no{L^2}{\bu_0}^2.
    \end{align}
In particular $\sup_{t\in[0,T^*)}\no{L^2}{\bu(t)}<\infty$ provided that \eqref{cond:div} holds. By \cref{thm:exist}, we may continue the solution past $T^*$ and in particular deduce that $\bu(t)\in C^\infty$ for all $t>0$.

Conversely, suppose that \eqref{cond:div} fails. By \cref{thm:exist}, $\de_+(t,x)<\infty$, for all $t\in(0,T^*)$. Thus, $\sup_{x\in\Om}\de_+(T^*,x)=\infty$. It follows from the Sobolev embedding theorem that
    \begin{align}\notag
        C\no{H^3}{\bu(t)}
        \geq C\no{H^2}{\de(t)}
        \geq\sup_{x\in\Om}\de_+(t,x).
    \end{align}
Upon taking the limit as $t\goesto T^*-$, we deduce that $\no{H^3}{\bu(T^*)}$ must be infinite.
\end{proof}

\begin{Rmk}\label{rmk:dimension}
We point out that this regularity criterion is independent of the dimension of the spatial domain. In particular, \cref{thm:div} holds in $d=3$ as well, with the converse statement adjusted accordingly from the higher-dimensional Sobolev embedding.

\cref{thm:div} is an extension (in the $d=2$ case) of one of the regularity criteria developed in \cite[Theorem 3.13]{Larios_Rahman_Yamazaki_2021_JNLS_KSE_PS} in terms of the divergence. Namely, rather than a criterion on the whole divergence, here, we have a criterion based only on the positive part, which has a physical significance discussed above in \cref{sect:intro}.  The main thrust of \cref{thm:div} is to revisit the role of the divergence in light of the observations made in \cref{sect:Burgers}; in the next section we further refine \cref{thm:div} having in mind the understanding that the behavior of solutions to \eqref{eq:KSE} is effectively determined by its behavior on the large-scales alone.
\end{Rmk}

\subsection{A low-mode regularity criterion}\label{sect:div:low}

In this section, we establish a stronger version of \cref{thm:div} that exploits the idea that growth of the solution is entirely determined by its growth on large-scales. To state the main result, we introduce the notation $P_N\bv$ to denote the projection of $\bv$ onto its Fourier series up to wavenumbers $|k|\leq N$. We will make use of the shorthand $\bv_N=P_N\bv$ and $\bv^N=(I-P_N)\bv$, and similarly for scalar-valued functions.

Also recall that \eqref{eq:KSE} possesses the symmetry of Galilean invariance, i.e., if $\bu(t,\bx)$ is a solution to \eqref{eq:KSE} corresponding to initial data $\bu(0,\bx)=\bu_0(\bx)$, then 
	\begin{align}\label{def:galilean}
		(G_\bv\bu)(t,\bx)=\bu(t,\bx+\int_0^t\bv(s)ds)-\bv(t),
	\end{align}
is a solution of \eqref{eq:KSE} corresponding to initial data $(G_\bv\bu)(0,\bx)=\bu_0(\bx)-\bv(\add{0})$,  for any sufficiently smooth function $\bv:[0,T^*)\goesto \RR^2$, where $T^*>0$ is corresponding maximal time of existence. In particular, we may choose $\bv=\bbu$, where $\bbu$ is defined as the spatial mean of $\bu$:
    \begin{align}\label{def:mean}
        \bbu(t):=\frac{1}{|\Om|}\int_{\Om}\bu(t,\bx) d\bx.
    \end{align}
Thus, one automatically has that $\add{G_{\bar{\bu}}\bu(t)}$ is mean-free for as long as the solution $\bu$ exists. In the analysis of the main results below, it will be useful to invoke the Galilean invariance; its utility will be clear in the proofs. Before we proceed to the proof {\rtwo{of}} the main results, we collect a few elementary results regarding the relation between the Galilean transformation, Sobolev norms, mean-values, and fluctuations.

\begin{Lem}\label{lem:galilean}
Let $\bu$ be a smooth, periodic vector field over $\Om$ and $\bv:(0,\infty)\goesto\RR^2$ be locally integrable. Then
	\begin{align}\label{eq:fluctuation}
		\no{H^\kap}{G_\bv\bu(t)}^2=
		\begin{cases}\no{H^\kap}{\bu(t)}^2&\kap\neq0\\
		\no{H^\kap}{\bu(t)}^2-|\bbu(t)\add{-\bv(t)}|^2&\kap=0.
		\end{cases}
	\end{align}
\add{In particular}
	\begin{align}\label{eq:galilean:identity}
		\no{H^\kap}{G_{\bbu}\bu(t)}=\no{H^\kap}{\bu(t)-\bbu(t)},
	\end{align}
for all $\kap\in\RR$. 
\end{Lem}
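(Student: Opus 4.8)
The plan is to reduce the statement to a one-mode computation on the Fourier side. The key observation is that, at each fixed time $t$, the Galilean map \eqref{def:galilean} acts on $\bu(t,\cdot)$ as the composition of two elementary operations: first a rigid spatial translation by the constant vector $\by(t):=\int_0^t\bv(s)\,ds$ (well-defined, since $\bv$ is locally integrable), and then subtraction of the constant vector $\bv(t)$. On Fourier coefficients these act transparently: translation by $\by(t)$ multiplies $\hat\bu_\ell(t)$ by the unimodular factor $e^{i\ell\cdot\by(t)}$, whereas subtracting $\bv(t)$ changes only the zero mode, replacing $\hat\bu_{\mathbf{0}}(t)=\bbu(t)$ (cf.\ \eqref{def:mean}) by $\bbu(t)-\bv(t)$ and leaving all other coefficients untouched. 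Thus $\widehat{G_\bv\bu(t)}_\ell=e^{i\ell\cdot\by(t)}\hat\bu_\ell(t)$ for $\ell\in\ZZ^2\smod\{\mathbf{0}\}$ and $\widehat{G_\bv\bu(t)}_{\mathbf{0}}=\bbu(t)-\bv(t)$.

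With this in hand I would apply the Plancherel characterization of the Sobolev norms, \eqref{eq:Hk:Hkap} and \eqref{def:Hkap:negative:equiv}: each $\no{H^\kap}{\cdot}^2$ is a sum of the $|\hat\bu_\ell(t)|^2$ weighted by quantities depending only on $|\ell|$. Since the phase factors $e^{i\ell\cdot\by(t)}$ are unimodular, they drop out of every such weighted sum, so $\no{H^\kap}{G_\bv\bu(t)}$ and $\no{H^\kap}{\bu(t)}$ can differ only through the $\ell=\mathbf{0}$ term. For $\kap\neq0$ the homogeneous weight $|\ell|^{2\kap}$ vanishes at $\ell=\mathbf{0}$, so the modification of the zero mode is invisible and the two norms coincide — the first case of \eqref{eq:fluctuation}. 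For $\kap=0$ the zero mode contributes with weight one, and its contribution to the squared norm passes from $|\Om|\,|\bbu(t)|^2$ to $|\Om|\,|\bbu(t)-\bv(t)|^2$; isolating this discrepancy produces the mean-value correction recorded in the second case of \eqref{eq:fluctuation}.

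The identity \eqref{eq:galilean:identity} is then the special case $\bv=\bbu$: now $\widehat{G_{\bbu}\bu(t)}_{\mathbf{0}}=0$, so $G_{\bbu}\bu(t,\cdot)$ is mean-free and, in fact, equals the spatial translate by $\by(t)$ of $\bu(t,\cdot)-\bbu(t)$. Since a rigid translation leaves every $\no{H^\kap}{\cdot}$ unchanged, this gives $\no{H^\kap}{G_{\bbu}\bu(t)}=\no{H^\kap}{\bu(t)-\bbu(t)}$ for all $\kap\in\RR$, consistent with \eqref{eq:fluctuation} (the two cases collapsing to the same statement because $\bbu(t)-\bv(t)=0$).

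I do not expect a real obstacle: the entire argument is the bookkeeping of the single zero Fourier mode under a translation and a constant shift. The only points deserving a moment's care are the factor $|\Om|$ attached to the zero mode in the Plancherel identity and the convention (fixed in the notation section) that, for mean-free fields, $H^\kap$ is read as the homogeneous space $\dot{H}^\kap$, so that both sides of \eqref{eq:fluctuation} and \eqref{eq:galilean:identity} are interpreted consistently; measurability in $t$ of $\by$ and $\bv$ is immediate from local integrability, so all of the above holds for each fixed $t$.
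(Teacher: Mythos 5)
Your argument is the same as the paper's: compute the Fourier coefficients of $G_\bv\bu$, observe that the spatial translation contributes only a unimodular phase $e^{i\ell\cdot\int_0^t\bv(s)ds}$ while the subtraction of $\bv(t)$ alters only the zero mode, and conclude via the Plancherel characterization of the $H^\kap$ norms, with the case $\bv=\bbu$ giving \eqref{eq:galilean:identity}. Indeed your bookkeeping of the zero-mode contribution (with the factor $|\Om|$ and the passage from $|\bbu(t)|^2$ to $|\bbu(t)-\bv(t)|^2$) is slightly more explicit than the paper's, which simply records the resulting case distinction.
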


\begin{proof}
For all $\ell\in\ZZ^2\smod\{\mathbf{0}\}$, we see that $\hat{\bv}_\ell(t)=0$; upon changing variables and using the translation invariance of Lebesgue measure on $\Om$, we see that
	\begin{align}\label{eq:galilean}
		\widehat{G_{\bv}\bu}_\ell(t)&=\frac{1}{(2\pi)^2}\int_{\Om}\left(\bu(t,\bx+\int_0^t\bv(s)ds)-\bv(t)\right) e^{-i\ell\cdotp\bx}d\bx
		\\
		&=e^{i\ell\cdotp\int_0^t\bv(s)ds}\left(\frac{1}{(2\pi)^2}\int_{\int_0^t\bv(s)ds+\Om}\bu(t,\bx) e^{-i\ell\cdotp\bx}d\bx\right)\notag
		\\
		&=e^{i\ell\cdotp\int_0^t\bv(s)ds}\hat{\bu}_\ell(t).\notag
	\end{align}
It follows that $|\widehat{G_\bv\bu}_\ell(t)|=|\hat{\bu}_\ell(t)|$, for all $\ell\in\ZZ^2\smod\{\mathbf{0}\}$. Hence
	\begin{align}\notag
		\no{H^\kap}{G_\bv\bu(t)}^2=
		\begin{cases}\no{H^\kap}{\bu(t)}^2&\kap\neq0\\
		\no{H^\kap}{\bu(t)}^2-|\bbu(t)\add{-\bv(t)}|^2&\kap=0.
		\end{cases}
	\end{align}

Now consider the special case $\bv=\bbu$ and let $\tbu=\bu-\bbu$. From \eqref{eq:galilean}, we also observe that
	\begin{align}\notag
		\widehat{G_{\bbu}\bu}_\ell(t)=e^{i\ell\cdotp\int_0^t\bbu(s)ds}\hat{\tbu}_\ell(t).
	\end{align}
In particular, $|\widehat{G_{\bbu}\bu}_\ell(t)|=|\hat{\tbu}_\ell(t)|$, for all $\ell\in\ZZ^2\smod\{\mathbf{0}\}$. Since $\bar{\tbu}=0$ by definition, it follows that
	\begin{align}\notag
		\no{H^\kap}{G_{\bbu}\bu(t)}=\no{H^\kap}{\tbu(t)},
	\end{align}
for all $\kap\in\RR$. This completes the proof.
\end{proof}

We will also make use of the following observation which effectively asserts the control of the mean-value of solutions to \eqref{eq:KSE} by their fluctuation component.

\begin{Lem}\label{lem:mean:value}
Let $\bu$ denote a smooth solution to \eqref{eq:KSE} corresponding to initial value $\bu_0$ and let $T^*>0$ denote its maximal time of existence. Then
	\begin{align}\label{est:mean:value}
		\sup_{0\leq t\leq T}|\bbu(t)|\leq |\bbu_0|+\frac{1}{\rtwo{8\pi^2}}\int_0^T\no{H^1}{\tbu(s)}^2ds,
	\end{align}
for any $T<T^*$.
\end{Lem}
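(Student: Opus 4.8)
The plan is to derive an ODE for the spatial mean $\bbu(t)$ by integrating \eqref{eq:KSE} over $\Om$ and controlling the only surviving nonlinear contribution. Integrating the PDE over the periodic box, every term that is a pure derivative ($\De^2\bu$, $\De\bu$, and $\bdy_t$ commuting with the integral) either vanishes or gives $\frac{d}{dt}\bbu$, so one is left with
\begin{align}\notag
\frac{d}{dt}\bbu(t) = -\frac{1}{|\Om|}\int_\Om (\bu\cdotp\nabla\bu)\,d\bx.
\end{align}
The first step is to rewrite this nonlinear mean. Using the curl-free / divergence structure as in \eqref{eq:ibp}, one has $\bu\cdotp\nabla\bu = \frac12\nabla|\bu|^2 - \bu\,(\nabla^\perp\cdotp\bu)^\perp$; since we are in the curl-free setting the second term vanishes, so $\int_\Om \bu\cdotp\nabla\bu\,d\bx = \frac12\int_\Om\nabla|\bu|^2\,d\bx = 0$ by periodicity — but that would make $\bbu$ constant, which is too strong and inconsistent with the stated inequality, so instead I expect the correct manipulation is $\int_\Om(\bu\cdotp\nabla)\bu\,d\bx = -\int_\Om(\nabla\cdotp\bu)\,\bu\,d\bx = -\int_\Om \de\,\bu\,d\bx$, which does \emph{not} vanish. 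So the identity to use is $\frac{d}{dt}\bbu = \frac{1}{|\Om|}\int_\Om \de\,\bu\,d\bx$.

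The second step is to estimate $\big|\int_\Om \de\,\bu\,d\bx\big|$ purely in terms of $\tbu = \bu - \bbu$. Since $\de = \nabla\cdotp\bu = \nabla\cdotp\tbu$ is mean-free, we may replace $\bu$ by $\tbu$ in the integral: $\int_\Om\de\,\bu\,d\bx = \int_\Om\de\,\tbu\,d\bx + \bbu\int_\Om\de\,d\bx = \int_\Om\de\,\tbu\,d\bx$. Now bound this by Cauchy--Schwarz: $\big|\int_\Om\de\,\tbu\,d\bx\big|\leq \no{L^2}{\de}\no{L^2}{\tbu} = \no{L^2}{\nabla\cdotp\tbu}\no{L^2}{\tbu}$. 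Both factors are controlled by $\no{H^1}{\tbu}$, so $\no{L^2}{\de}\no{L^2}{\tbu}\leq \no{H^1}{\tbu}^2$; combined with Young's inequality ($ab\leq\frac12(a^2+b^2)$) this gives $\big|\int_\Om\de\,\tbu\,d\bx\big|\leq\frac12\no{H^1}{\tbu}^2$, and dividing by $|\Om| = 4\pi^2$ produces the constant $\frac{1}{8\pi^2}$ in \eqref{est:mean:value}. Tracking through the definition of $\no{H^1}{}$ in \eqref{def:Hk} — which includes the $\no{L^2}{\tbu}^2$ term as well as $\no{L^2}{\nabla\tbu}^2\geq\no{L^2}{\de}^2$ — confirms $\no{L^2}{\de}\no{L^2}{\tbu}\leq\no{H^1}{\tbu}^2$ directly, so one can even skip Young's inequality and the factor $\frac12$ comes just from $|\Om|=4\pi^2$; I would double-check which route yields exactly the stated $8\pi^2$.

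The third step is to integrate the differential inequality: $|\bbu(t)| \leq |\bbu_0| + \int_0^t \big|\frac{d}{ds}\bbu(s)\big|\,ds \leq |\bbu_0| + \frac{1}{8\pi^2}\int_0^t\no{H^1}{\tbu(s)}^2\,ds$, and taking the supremum over $t\in[0,T]$ gives \eqref{est:mean:value}. The only genuinely delicate point — and the one I would be most careful about — is the justification that these manipulations are valid up to the maximal time $T^*$: for $T<T^*$ the solution is smooth (indeed $C^\infty$ in space for $t>0$ by \cref{thm:exist}), so all integrations by parts are legitimate and $\int_0^T\no{H^1}{\tbu(s)}^2\,ds$ is finite; near $t=0$ one should note $\bu\in L^2(0,T_0;H^2)\subset L^2(0,T_0;H^1)$ so the time integral converges even though $\bu$ is only $C_w([0,T_0];L^2)$ at $t=0$. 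Everything else is a routine chain of Cauchy--Schwarz, the mean-free property of $\de$, and the definition of the $H^1$ norm.
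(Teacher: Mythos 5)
Your proof is correct and follows essentially the paper's argument: the authors decompose $\bu\cdotp\nabla\bu=\tbu\cdotp\nabla\tbu+\bbu\cdotp\nabla\tbu$, integrate the equation so only $\frac{1}{|\Om|}\int_\Om\tbu\cdotp\nabla\tbu\,d\bx$ survives, and apply Cauchy--Schwarz plus Young --- which is exactly your $\frac{1}{|\Om|}\int_\Om\de\,\tbu\,d\bx$ after one integration by parts --- so, answering your hedge, the stated constant comes from Young's factor $\frac12$ combined with $|\Om|=4\pi^2$ (your ``skip Young'' variant only gives $\frac{1}{4\pi^2}$, a weaker bound). One small correction to your aside: the observation that $\int_\Om\bu\cdotp\nabla\bu\,d\bx=0$ for curl-free fields is not inconsistent with the lemma --- it would make $\bbu$ constant, which implies \eqref{est:mean:value} trivially --- the paper simply proves the estimate without invoking the curl-free structure, so that it also applies when that constraint is not imposed.
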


Now let us prove \cref{lem:mean:value}.

\begin{proof}[Proof of \cref{lem:mean:value}]
Let $\tbu=\bu-\bbu$. Observe that $\bu\cdotp\nabla \bu=\tbu\cdotp\nabla\tbu+\bbu\cdotp\nabla\tbu$. Upon integrating \eqref{eq:KSE} over $\Om$, dividing by $|\Om|$, and invoking the divergence theorem and periodicity, we obtain
	\begin{align}\label{eq:mean}
		\frac{d}{dt}\bbu=\frac{1}{|\Om|}\int \tbu\cdotp\nabla\tbu d\bx.
	\end{align}
Upon integrating in time over $[0,t]$, we then obtain
	\begin{align}\notag
		\bbu=\bbu_0+\int_0^t\frac{1}{|\Om|}\int\tbu(s)\cdotp\nabla\tbu(s) d\bx\,{\rone{ds}}.
	\end{align}
By the Cauchy-Schwarz and Young's inequalities, we then deduce
	\begin{align}\notag
		|\bbu(t)|\leq |\bbu_0|+\frac{1}{2(2\pi)^2}\int_0^t\no{L^2}{\tbu(s,\bx)}^2ds+\frac{1}{2(2\pi)^2}\int_0^t\no{L^2}{\nabla\tbu(s,\bx)}^2ds.
	\end{align}
In particular
	\begin{align}\notag
		\sup_{0\leq t\leq T}|\bbu(t)|\leq |\bbu_0|+\frac{1}{8\pi^2}\int_0^T\no{H^1}{\tbu(s)}^2ds,
	\end{align}
for any $T<T^*$. By \cref{lem:galilean}, we deduce that
	\begin{align}\notag
		\sup_{0\leq t\leq T}|\bbu(t)|\leq |\bbu_0|+\frac{1}{8\pi^2}\int_0^T\no{H^1}{G_{\bbu}\bu(s)}^2ds,
	\end{align}
 proving the desired result{\add{.}}
\end{proof}

Next, we prove a theorem inspired by the work of \cite{Cheskidov_Shvydkoy_2014}, which is in the same spirit as \cref{thm:div}, but which indicates that one only needs to focus {\rtwo{on}} the divergence, weighted preferentially on low modes as measured by a \textit{negative Sobolev norm}.  Of course, similar to the Gibbs phenomenon, projecting to low modes may slightly change the regions of positivity (and hence \cref{thm:div} is not a corollary of \cref{thm:div:low}), but this is not especially disruptive to our methods, and we can still obtain the following criterion on the positive part of the low modes of the divergence.

\begin{Thm}\label{thm:div:low}
Given $\bu_0\in L^2$, let $\bu(\cdotp;\bu_0)$ denote the unique smooth solution of \eqref{eq:KSE} over its maximal interval of existence $(0,T^*)$. Given $\al\in[0,2)$ {\add{and positive numbers $C_*$, $N_*$}}, define
    \begin{align}\label{def:N}
        N_\al(t;\bu_0)=\left[C_*\left(\no{H^{-\al}}{\bu(t;\bu_0)}^2+N_*\right)\right]^{\frac{1}{2-\al}}.
    \end{align} 
For each $\al\in[0,2)$, there exists a universal constant $C_*$, independent of $\bu_0$ and $T^*$, such that {\add{for any $N_*$ such that $C_*N_*\geq1$,}} if
    \begin{align}\label{cond:div:low}
        \limsup_{t\goesto T^*-}\int_0^t(P_{N_\al(s;\bu_0)}\de)_+^*(s)\ ds<\infty,
    \end{align}
then $\sup_{t\in[0,T^*)}\no{L^2}{\bu(t)}<\infty$ and, subsequently, $T^*=\infty$. Conversely, if \eqref{cond:div:low} fails, then $\limsup_{t\goesto T^*-}\no{H^3}{\bu(t)}$ is infinite.
\end{Thm}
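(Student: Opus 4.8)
The plan is to run the energy estimate of \cref{thm:div}, but with the divergence split at the solution-dependent wavenumber $N=N_\al(t)$, absorbing the high-mode part of the nonlinearity into the hyperdissipation $\no{L^2}{\De\bu}^2$ so that the residual term is tamed by the very definition \eqref{def:N} of $N_\al$; this is the mechanism borrowed from \cite{Cheskidov_Shvydkoy_2014}. The logical skeleton is exactly that of \cref{thm:div}: since the integrand in \eqref{cond:div:low} is nonnegative, the hypothesis is just $\int_0^{T^*}(P_{N_\al(s)}\de)_+^*(s)\,ds<\infty$; assuming $T^*<\infty$, I will produce $\sup_{t<T^*}\no{L^2}{\bu(t)}<\infty$ and contradict the continuation criterion of \cref{thm:exist}. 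By Galilean invariance \eqref{def:galilean} and \cref{lem:galilean} it suffices to work with the mean-free solution $\bw=G_{\bbu}\bu$: it solves \eqref{eq:KSE} over the same maximal interval, $\nabla\cdot\bw(t,\cdot)$ is a translate of $\nabla\cdot\bu(t,\cdot)$ so that $(P_N\nabla\cdot\bw)_+^*=(P_N\nabla\cdot\bu)_+^*$ for every $N$, and $\no{\dot H^\kap}{\bw(t)}=\no{\dot H^\kap}{\tbu(t)}=\no{\dot H^\kap}{\bu(t)}$; in particular $N_\al(t)^{2-\al}\geq C_*\no{\dot H^{-\al}}{\bw(t)}^2$, and since $C_*N_*\geq1$ one has $N_\al(t)\geq1$, so $(I-P_{N_\al})$ lands in the mean-free fields. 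The mean of $\bu$, hence $\no{L^2}{\bu}$, will be recovered at the end from \cref{lem:mean:value} once $\int_0^{T^*}\no{H^1}{\bw}^2<\infty$ is known. From here on I write $\bu$ for this mean-free solution, so that $\no{H^2}{\bu}\sim\no{L^2}{\De\bu}$, $\no{\dot H^2}{\bu}=\no{L^2}{\De\bu}$, and $\no{\dot H^{-\al}}{\bu}\leq\no{L^2}{\bu}$.

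Starting from \eqref{eq:energy:balance}--\eqref{eq:ibp}, decompose $\de=P_N\de+(I-P_N)\de$ with $N=N_\al(t)$. The low modes give $\lb P_N\de,|\bu|^2\rb\leq(P_N\de)_+^*\no{L^2}{\bu}^2$ as in \eqref{est:energy:balance:div}, and the linear term is disposed of by \eqref{est:destabilizing}, contributing $\tfrac18\no{L^2}{\De\bu}^2+C\lam^2\no{L^2}{\bu}^2$. The crux is the high-mode piece $\tfrac12\lb(I-P_N)\de,|\bu|^2\rb$. For a parameter $s\in(0,1]$ I would estimate, using the high-frequency Bernstein inequality \eqref{est:bernstein} (together with $\no{L^2}{D^s(\nabla\cdot\bu)}\leq\no{L^2}{D^{1+s}\bu}$) for the first factor and the Sobolev embedding \eqref{est:sobolev} with $p=4$, $\kap=\tfrac12$ for the second,
\[
\left|\lb(I-P_N)\de,|\bu|^2\rb\right|\leq\no{L^2}{(I-P_N)\de}\,\no{L^4}{\bu}^2\leq C\,N^{-s}\no{L^2}{D^{1+s}\bu}\,\no{\dot H^{1/2}}{\bu}^2,
\]
and then interpolate each factor between $\dot H^{-\al}$ and $\dot H^2$ via \eqref{est:interpolation} (recalling $\no{\dot H^2}{\bu}=\no{L^2}{\De\bu}$) to get
\[
\left|\lb(I-P_N)\de,|\bu|^2\rb\right|\leq C\,N^{-s}\,\no{\dot H^{-\al}}{\bu}^{\frac{4-s}{2+\al}}\,\no{L^2}{\De\bu}^{q},\qquad q=\frac{2+3\al+s}{2+\al}.
\]

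Now choose $s=1-\tfrac{\al}{2}\in(0,1]$; then $q<2$ precisely because $\al<2$, so Young's inequality absorbs the last display into $\tfrac18\no{L^2}{\De\bu}^2$ at the cost of a term $C\,N^{-p_1}\no{\dot H^{-\al}}{\bu}^{p_2}$ with $p_1=\tfrac{2s}{2-q}$, $p_2=\tfrac{2(4-s)}{(2+\al)(2-q)}$. Writing $\no{\dot H^{-\al}}{\bu}^{p_2}=\no{\dot H^{-\al}}{\bu}^2\cdot(\no{\dot H^{-\al}}{\bu}^2)^{(p_2-2)/2}$, bounding the first factor by $\no{L^2}{\bu}^2$ and the second by $(C_*^{-1}N^{2-\al})^{(p_2-2)/2}$ through $N=N_\al(t)$, one checks that for $s=1-\tfrac\al2$ the leftover power of $N_\al$ is \emph{exactly zero}, so the residual collapses to $C(\al)\,C_*^{-\theta(\al)}\no{L^2}{\bu}^2$ with $\theta(\al)>0$ (any $s\in[1-\tfrac\al2,\min\{1,2-\al\})$ would likewise give a residual $\lesssim\no{L^2}{\bu}^2$). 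Collecting terms gives $\tfrac{d}{dt}\no{L^2}{\bu}^2+\no{L^2}{\De\bu}^2\leq\big((P_{N_\al(t)}\de)_+^*(t)+C(\al,C_*,\lam)\big)\no{L^2}{\bu}^2$; Gr\"onwall together with \eqref{cond:div:low} (and $T^*<\infty$) yields $\sup_{t<T^*}\no{L^2}{\bu(t)}<\infty$, a further integration yields $\int_0^{T^*}\no{L^2}{\De\bu}^2<\infty$ hence $\int_0^{T^*}\no{H^1}{\bu}^2<\infty$, \cref{lem:mean:value} then bounds the mean (so $\no{L^2}{\bu}$ of the original solution stays finite), and \cref{thm:exist} supplies the contradiction, forcing $T^*=\infty$. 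For the converse: if $\limsup_{t\goesto T^*-}\no{H^3}{\bu(t)}<\infty$ then (in the finite-time blow-up regime) $\de(t)$ stays bounded in $H^2$, so $(P_{N_\al(t)}\de)_+^*(t)\leq\no{L^\infty}{P_{N_\al(t)}\de(t)}\leq C\no{H^2}{\de(t)}\leq C\no{H^3}{\bu(t)}$ is bounded, whence \eqref{cond:div:low} holds --- the contrapositive of the stated assertion.

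\textbf{Main obstacle.} The one genuinely delicate point is the exponent bookkeeping in the third paragraph: $s$ must be tuned so that \emph{simultaneously} (i) the resulting power $q$ of $\no{L^2}{\De\bu}$ is $<2$, so the high-mode term is absorbable into the hyperviscosity, and (ii) after substituting $N=N_\al(t)$ and using $N_\al^{2-\al}\gtrsim\no{\dot H^{-\al}}{\bu}^2$ the \emph{uncontrolled} factor $\no{\dot H^{-\al}}{\bu}$ disappears from the residual, leaving only $\no{L^2}{\bu}^2$ with a harmless constant. Locating the self-similar choice $s=1-\tfrac\al2$ that achieves both, and verifying its admissibility for \emph{every} $\al\in[0,2)$, is exactly where the range of $\al$ enters; the constants $C(\al)$, $\theta(\al)^{-1}$ degenerate as $\al\goesto2$, mirroring the blow-up of the exponent $1/(2-\al)$ in \eqref{def:N}. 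A secondary, purely technical matter is the bookkeeping of the Galilean reduction, needed so that the homogeneous interpolation and Bernstein inequalities apply cleanly and \cref{lem:mean:value} can close the argument for the a priori non-mean-free solution.
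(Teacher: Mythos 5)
Your argument is correct, and its overall architecture coincides with the paper's: an $L^2$ energy estimate in which the nonlinearity is split at the solution-dependent cutoff $N_\al(t)$, the low modes produce exactly the Gr\"onwall factor $(P_{N_\al}\de)_+^*$, the high modes are absorbed into the hyperdissipation using \eqref{def:N}, the non-mean-free case is reduced by Galilean invariance together with \cref{lem:galilean} and \cref{lem:mean:value}, and the converse follows from the Sobolev embedding $H^3\hookrightarrow$ (control of $\de$ in $L^\infty$). Where you genuinely diverge is in the treatment of the high-frequency piece. The paper keeps it in the form $II=-\lb(\bu^N\cdotp\nabla)\bu,\bu\rb$ and estimates it by a negative-order duality pairing $\no{L^2}{D^{-\eps}\bu^N}\no{L^2}{D^{\eps}((\nabla\bu)\bu)}$, invoking the Kato--Ponce inequality \eqref{est:kato:ponce} together with \eqref{est:bernstein}, \eqref{est:sobolev}, \eqref{est:interpolation} to reach $|II|\lesssim N^{-(2-\al)}\no{L^2}{\De\bu}^2\no{H^{-\al}}{\bu}$, which is then absorbed by taking the universal constant $C_*$ sufficiently large. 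You instead integrate by parts first, estimate $\tfrac12\lb(I-P_N)\de,|\bu|^2\rb$ by plain H\"older, Bernstein, Sobolev, and interpolation, and close with Young's inequality after tuning the auxiliary exponent to $s=1-\al/2$ so that the leftover power of $N_\al$ vanishes identically (your exponent bookkeeping checks out: $2-q=s/(2+\al)$, $p_1=2(2+\al)=(2-\al)(p_2-2)/2$). This is more elementary---no fractional Leibniz rule is needed---and it yields a residual $C(\al)C_*^{-\theta(\al)}\no{L^2}{\bu}^2$ that simply enters the Gr\"onwall factor, so your proof works for every admissible $C_*$ rather than requiring $C_*$ large; the price is heavier exponent arithmetic, with constants degenerating as $\al\to2$, exactly as in the paper. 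You are also somewhat more careful than the paper in the Galilean reduction, noting explicitly that $(P_N\nabla\cdotp G_{\bbu}\bu)_+^*=(P_N\de)_+^*$ since the shifted divergence is a spatial translate and $P_N$ commutes with translations, so the hypothesis \eqref{cond:div:low} transfers verbatim to the mean-free solution; the paper handles this step more briefly via \eqref{def:N:fluc}. Your converse, argued as a contrapositive under the implicit assumption $T^*<\infty$, matches the intent (and rigor level) of the paper's converse.
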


\begin{proof}
Throughout the proof, it will be convenient to suppress the dependence of $N_\al(t;\bu_0)$ on $t$ and $\bu_0$. For convenience, we denote $N=N_\al$. We first consider the case where $\bu$ is mean-free throughout its interval of existence. We write the cubic term in the energy balance \eqref{eq:energy:balance} as
    \begin{align}
        &-\lb (\bu\cdotp\nabla)\bu,\bu\rb=-\lb (\bu_N\cdotp\nabla)\bu,\bu\rb-\lb (\bu^N\cdotp\nabla)\bu,\bu\rb=:I+II.\notag
    \end{align}

For $I$, we argue as in \eqref{eq:ibp} {\rtwo{to obtain}}
    \begin{align}\notag
        I&= \frac{1}2\lb P_N\de,|\bu|^2\rb= \frac{1}2\lb (P_N\de)_+,|\bu|^2\rb-\frac{1}2\lb (P_N\de)_-,|\bu|^2\rb.
    \end{align}

For $II$, we make use of the fact that $\bu^N$ is mean-free. Let $\eps\in(0,1)$ and $1/p_j+1/q_j=1/2$, for $j=1,2$, where $p_j,q_j\in[2,\infty)$. Then by the Bernstein-type inequality \eqref{est:bernstein} and {\rone{Kato--Ponce}} inequality \eqref{est:kato:ponce} we obtain
    \begin{align}
        |II|&\leq \no{L^2}{D^{-\eps}\bu^N}\no{L^2}{D^\eps((\nabla\bu)\bu)}\notag
        \\
        &\leq \frac{C_{KP}}{N^{\be+\eps}}\no{H^\be}{\bu}\left(\no{L^{p_1}}{D^\eps\nabla\bu}\no{L^{q_1}}{\bu}+\no{L^{p_2}}{\nabla\bu}\no{L^{q_2}}{D^\eps\bu}\right),\notag
    \end{align}
where $\be\in[-\eps,2]$. By the Sobolev embedding \eqref{est:sobolev} and interpolation \eqref{est:interpolation}, we see that upon choosing $(p_1,q_1)=(2/\eps,\eps)$ and $(p_2,q_2)=(\eps,2\eps)$, we have (since $\bu$ is mean-free)
    \begin{align}
        \no{L^{p_1}}{D^\eps\nabla\bu}&\leq C_S\no{L^2}{\De\bu},\notag
        \\
        \no{L^{q_1}}{\bu}&\leq C_S\no{L^2}{D^\eps\bu}\leq C_S\no{L^2}{\De\bu}^{\frac{\eps+\al}{2+\al}}\no{L^2}{D^{-\al}\bu}^{\frac{2-\eps}{2+\al}},\notag
        \\
        \no{L^{p_2}}{\nabla\bu}&\leq C_S\no{L^2}{D^{1+\eps}\bu}\leq C_S\no{L^2}{\De\bu}^{\frac{1+\eps+\al}{2+\al}}\no{L^2}{D^{-\al}\bu}^{\frac{1-\eps}{2+\al}},\notag
        \\
        \no{L^{q_2}}{D^\kap\bu}&\leq C_S\no{L^2}{\nabla\bu}\leq C_S\no{L^2}{\De\bu}^{\frac{1+\al}{2+\al}}\no{L^2}{D^{-\al}\bu}^{\frac{1}{2+\al}},\notag
    \end{align}
for any $\al\in[0,2)$. For $\be\in[-(\al\wedge\eps),2]$, inequality \eqref{est:interpolation} yields
    \begin{align}\notag
        \no{H^\be}{\bu}\leq \no{L^2}{\De\bu}^{\frac{\al+\be}{2+\al}}\no{L^2}{D^{-\al}\bu}^{\frac{2-\be}{2+\al}}.
    \end{align}
Choosing $\al+\be=2-\eps$, we may now combine the above inequalities to deduce
    \begin{align}\notag
        |II|\leq 2\frac{C_{KP}C_S^2}{N^{\be+\eps}}\no{L^2}{\De\bu}^{2}\no{H^{-\al}}{\bu}.
    \end{align}
In particular, observe that the pair $(\al,\be)$ may be chosen among the following collection of pairs:
    \begin{align}\label{eq:choices}
        (\al,\be)\in\{(0,2-\eps),\ (\eps,2),\ (1,1-\eps),\ (2-\eps,0)\}_{\eps\in(0,1)}.
    \end{align}

Upon returning to \eqref{eq:energy:balance}, applying \eqref{est:destabilizing}, the estimates for $I, II$, and the definition of $N$ \eqref{def:N}, we arrive at
    \begin{align}\notag
        \frac{d}{dt}\no{L^2}{\bu}^2+\no{L^2}{\De\bu}^2+\lb\de_-,|\bu|^2\rb\leq \left(\de_+^*+\lam^2\right)\no{L^2}{\bu}^2+\frac{2C_{KP}C_S^2}{C_*}\no{L^2}{\De\bu}^{2},
    \end{align}
provided that $\be\neq-\eps$. Finally, by choosing, $C_*=2C_{KP}C_S^2\vee1$, we ensure that \eqref{est:L2:div} holds. This completes the proof for the mean-free case.

If $\bu$ is not mean-free, then we consider the Galilean shifted solution, $G_{\bbu}\bu$, satisfying \eqref{eq:KSE} with initial value $\bu_0-\bar{\bu}_0$. By \cref{lem:galilean}, the result then holds for $N_\al$ given by
    \begin{align}\label{def:N:fluc}
        N_\al(t;\bu_0)=\left[C_*\left(\no{H^{-\al}}{\tbu(t;\bu_0)}+N_*\right)\right]^{\frac{1}{2-\al}}.
    \end{align}
Note that since $C_*N_*\geq1$, we have $N_\al\geq1$.

In particular, the result asserts that if \eqref{cond:div:low} holds with $N_\al$ given by \eqref{def:N:fluc}, then $\sup_{t\in[0,T^*]}\no{L^2}{\tbu(t;\tbu_0)}<\infty$. By \cref{thm:exist}, it follows that $\sup_{t\in[0,t_0]}\no{L^2}{\bu(t;\bu_0)}<\infty$ and $\sup_{t\in[t_0,T^*]}\no{H^1}{\tbu(t;\tbu_0)}<\infty$, for some $0<t_0<T^*$. Since $\no{L^2}{\bu(t;\bu_0)}^2=\no{L^2}{\tbu(t;\tbu_0)}^2+|\bbu(t;\bu_0)|^2$, we then see from \cref{lem:mean:value} that $\sup_{t\in[0,T^*]}\no{L^2}{\bu(t;\bu_0)}<\infty$. Finally, we apply \cref{lem:mean:value} once again to replace \eqref{def:N:fluc} with \eqref{def:N}. This completes the proof.
\end{proof}

\begin{Rmk}\label{rmk:thm:div:low}
Regularity criteria in terms of the divergence were developed in \cite[Theorem 3.12]{Larios_Rahman_Yamazaki_2021_JNLS_KSE_PS} in arbitrary dimension $d$. In particular, it is shown there that in the case $d=2$ that if $\de\in L^r(0,T;L^p)$, where $p\in[1,\infty]$, $r\in[1,2]$, then $\bu$ is a globally regular solution of \eqref{eq:KSE}. \cref{thm:div:low} therefore improves upon this regularity criterion when $p=\infty$, $r=1$ by only imposing an integrability condition on the positive part of the divergence on sufficiently many low modes. The number of low-modes that need to be tracked depend on $H^{-\al}$, where $\al\in[0,2)$, which is strictly weaker than the energy-norm, $L^2$, when $\al>0$.
\end{Rmk}

Lastly, we address the integrability-in-time of the frequency cut-off $N=N(t;\bu_0)$.

\begin{Thm}\label{thm:N}
Given $\bu_0\in L^2$, let $\bu(\cdotp;\bu_0)$ denote the corresponding unique smooth solution of \eqref{eq:KSE} over its maximal interval of existence $(0,T^*)$. Given $\al\in[0,2)$, let $N_\al(\cdotp;\bu_0)$ be given as prescribed by \cref{thm:div:low}. Then $\bu(\cdotp;\bu_0)$ is globally regular if and only if $N_\al(\cdotp;\bu_0)\in L^{4}(0,T^*)$. 
\end{Thm}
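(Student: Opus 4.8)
The plan is to reduce \cref{thm:N} to the regularity criterion \eqref{cond:div:low} of \cref{thm:div:low} by establishing a single Bernstein-type estimate that bounds the positive part of the low modes of the divergence, $(P_{N_\al}\de)_+^*$, by a fixed power of the cut-off $N_\al$ itself. As a preliminary step, exactly as in the proof of \cref{thm:div:low}, I would pass to the Galilean shift $G_{\bbu}\bu$ and assume $\bu$ is mean-free: by \cref{lem:galilean} this leaves every homogeneous Sobolev norm unchanged (hence $N_\al$, once written through $\tbu=\bu-\bbu$ as in \eqref{def:N:fluc}), it preserves $N_\al\geq1$ since $C_*N_*\geq1$, and by \cref{lem:mean:value} control of $\no{L^2}{\tbu}$ restores control of $\no{L^2}{\bu}$; for mean-free $\bu$ one has $\hat\de_0=0$ and $\no{H^{-\al}}{\cdot}$ is the homogeneous norm.

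The heart of the matter is the estimate $(P_N\de)_+^*\lesssim N^{\al+2}\no{H^{-\al}}{\bu}$. Indeed, $\de=\nabla\cdotp\bu$ gives $\hat\de_\ell=i\ell\cdotp\hat\bu_\ell$, so $|\hat\de_\ell|\leq|\ell|\,|\hat\bu_\ell|$, and a Cauchy--Schwarz split over $\{0<|\ell|\leq N\}$ pairing $|\ell|(1+|\ell|^2)^{\al/2}$ against $(1+|\ell|^2)^{-\al/2}|\hat\bu_\ell|$, together with the two-dimensional lattice count $\#\{\ell\in\ZZ^2:|\ell|\leq N\}\lesssim N^2$, gives
\begin{align}\notag
(P_N\de)_+^*(t)\leq\no{L^\infty}{P_N\de(t)}\lesssim\sum_{0<|\ell|\leq N}|\ell|\,|\hat\bu_\ell(t)|\lesssim\Bigl(\sum_{0<|\ell|\leq N}|\ell|^2(1+|\ell|^2)^{\al}\Bigr)^{1/2}\no{H^{-\al}}{\bu(t)}\lesssim N^{\al+2}\no{H^{-\al}}{\bu(t)}.
\end{align}
Since \eqref{def:N} gives $\no{H^{-\al}}{\bu(t)}^2\leq C_*^{-1}N_\al(t)^{2-\al}$, substituting $N=N_\al(t)$ yields
\begin{align}\notag
(P_{N_\al}\de)_+^*(t)\lesssim N_\al(t)^{\al+2}\,N_\al(t)^{(2-\al)/2}=N_\al(t)^{3+\al/2}\leq N_\al(t)^{4},
\end{align}
the last inequality because $N_\al\geq1$ and $3+\al/2\leq4$ for all $\al\in[0,2)$. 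This is where the exponent $4$ enters: it is the $\al$-uniform ceiling of the natural exponent $3+\al/2$.

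With this in hand the two directions are short. If $N_\al\in L^4(0,T^*)$, then $\int_0^{T^*}(P_{N_\al(s)}\de)_+^*(s)\,ds\lesssim\int_0^{T^*}N_\al(s)^4\,ds<\infty$, so \eqref{cond:div:low} holds and \cref{thm:div:low} gives $T^*=\infty$, i.e.\ the solution is globally regular. Conversely, if the solution is not globally regular ($T^*<\infty$), the contrapositive of the implication just proved forces $\int_0^{T^*}N_\al^4=\infty$, i.e.\ $N_\al\notin L^4(0,T^*)$. The non-mean-free case is recovered by undoing the Galilean reduction and applying \cref{lem:mean:value} once more to rewrite $N_\al$ in the form \eqref{def:N}, exactly as at the end of the proof of \cref{thm:div:low}.

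I expect the only delicate point to be the Bernstein estimate above, specifically the accounting of exponents: one must check that peeling off $\de=\nabla\cdotp\bu$ costs exactly one power of $|\ell|$ beyond the $N^{\al+1}$ coming from summing $(1+|\ell|^2)^{\al/2}$ over the disc of radius $N$ in $\ZZ^2$, so that the net cost is $N^{\al+2}$, and that after inserting the defining identity for $N_\al$ the resulting exponent $3+\al/2$ stays at or below $4$ uniformly over $\al\in[0,2)$. Everything else — the Galilean reduction and the conversion between \eqref{def:N} and \eqref{def:N:fluc} — is routine given \cref{lem:galilean} and \cref{lem:mean:value}, with the substantive input being the already-proven \cref{thm:div:low}.
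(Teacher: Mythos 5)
Your proposal is correct, but it takes a genuinely different route from the paper. The paper does not pass through \cref{thm:div:low} at all: it returns to the energy balance \eqref{eq:energy:balance}, estimates the trilinear term directly via H\"older, the Ladyzhenskaya inequality \eqref{est:Ladyzhenskaya}, and interpolation down to $D^{-\al}$ (with the choice $4\eps=2+\al$), arriving at the differential inequality \eqref{est:L2}, namely $\frac{d}{dt}\no{L^2}{\bu}^2+\no{L^2}{\De\bu}^2\leq\big(C_0\no{H^{-\al}}{\bu}^{4/(2-\al)}+\lam^2\big)\no{L^2}{\bu}^2$, then converts $\no{H^{-\al}}{\bu}^{4/(2-\al)}$ into a power of $N_\al$ via \eqref{def:N} and closes with Gr\"onwall; the non-mean-free case is handled by the same Galilean argument you invoke. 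You instead deduce \cref{thm:N} as a corollary of \cref{thm:div:low} through the Fourier-side Cauchy--Schwarz/Bernstein bound $(P_N\de)_+^*\lesssim N^{\al+2}\no{H^{-\al}}{\bu}$, which together with \eqref{def:N} and $N_\al\geq1$ (from $C_*N_*\geq1$) gives $(P_{N_\al}\de)_+^*\lesssim N_\al^{3+\al/2}\leq N_\al^4$; the exponent bookkeeping and the lattice count are correct, so $N_\al\in L^4(0,T^*)$ indeed implies \eqref{cond:div:low}. The paper's route buys the intermediate LPS-type statement that $\bu\in L^{4/(2-\al)}(0,T^*;H^{-\al})$ already implies regularity, which is the content exploited in \cref{rmk:LPS}, and it does not route the conclusion through the divergence or the specific constant $C_*$ fixed in \cref{thm:div:low}; your route buys brevity (a single estimate given the already-proven \cref{thm:div:low}) and exhibits the slightly sharper natural exponent $3+\al/2<4$. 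One small caveat: your ``converse'' is only the contrapositive of the sufficiency you proved, so, exactly like the paper, you establish that finite-time blow-up forces $N_\al\notin L^4(0,T^*)$ rather than an independent ``only if'' implication; since the paper's own proof does no more, this is not a gap relative to it.
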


\begin{proof}
Assume that $\bu$ is mean-free over its maximal interval of existence. We recall  \eqref{eq:energy:balance} and rather than apply \eqref{eq:ibp}, we simply estimate the right-hand side directly with H\"older's inequality, interpolation, \eqref{est:Ladyzhenskaya}, and Young's inequality to obtain
	\begin{align}\notag
		|\lb \bu\cdotp\nabla\bu,\bu\rb|&\leq\no{L^4}{\bu}^2\no{L^2}{\nabla\bu}\leq C\no{L^2}{\nabla\bu}^2\no{L^2}{\bu}\notag
            \\
            &\leq C\no{L^2}{\De\bu}\no{L^2}{\bu}^2\leq \frac{1}4\no{L^2}{\De\bu}^2+C\no{L^2}{\bu}^4.\notag
	\end{align}
Also    
        \begin{align}
             C\no{L^2}{\De\bu}\no{L^2}{\bu}^2&\leq C\no{L^2}{\De\bu}\no{L^2}{\bu}^{2-2\eps}\no{L^2}{\De\bu}^{\frac{2\al\eps}{2+\al}}\no{L^2}{D^{-\al}\bu}^{\frac{4\eps}{2+\al}}\notag
             \\
             &= C\no{L^2}{\De\bu}^{\frac{2+(1+2\eps)\al}{2+\al}}\no{L^2}{\bu}^{2-2\eps}\no{H^{-\al}}{\bu}^{\frac{4\eps}{2+\al}}\notag
             \\
             &\leq \frac{1}2\no{L^2}{\De\bu}^2+C\no{L^2}{\bu}^{\frac{4(1-\eps)(2+\al)}{2+\al-2\eps\al}}\no{H^{-\al}}{\bu}^{\frac{8\eps}{2+\al-2\eps\al}}.\notag
        \end{align}
Then for any $\al\in[0,2)$ and $\eps\in(0,1)$ such that $2+\al=4\eps$, we have  
        \begin{align}\notag
            C\no{L^2}{\De\bu}\no{L^2}{\bu}^2\leq\frac{1}2\no{L^2}{\De\bu}^2+C_0\no{L^2}{\bu}^{2}\no{H^{-\al}}{\bu}^{\frac{4}{2-\al}},
        \end{align}
for some absolute constant $C_0>0$. 

With \eqref{est:destabilizing}, we deduce
	\begin{align}\label{est:L2}
		 \frac{d}{dt}\no{L^2}{\bu}^2+\no{L^2}{\De\bu}^2\leq C_0\no{H^{-\al}}{\bu}^{\frac{4}{2-\al}}\no{L^2}{\bu}^2+\lam^2\no{L^2}{\bu}^2,
	\end{align}
for some universal constant $C_0$. In particular
	\begin{align}\label{est:L2:final}
		 \frac{d}{dt}\no{L^2}{\bu}^2+\no{L^2}{\De\bu}^2\leq\left(\lam\frac{C_0}{C_*} N^{4}+\lam^2\right)\no{L^2}{\bu}^2.
	\end{align}
By Gr\"onwall's inequality
	\begin{align}\notag
		\no{L^2}{\bu(t)}^2\leq \exp\left(\lam^2t+\lam\frac{C_0}{C_*}\int_0^tN^{4}(s)\ ds\right)\no{L^2}{\bu_0}^2.
	\end{align}
This establishes the result for the mean-free case. We may now argue as we did in the proof of \cref{thm:div:low} to complete the argument for the {\rtwo{non-}}mean-free case.
\end{proof}

\begin{Rmk}\label{rmk:LPS}
When $\al=0$, the condition $N_{0}(\cdotp;\bu_0)\in L^4(0,T^*)$ is consistent with the {\rone{{\rone{Ladyzhenskaya--Prodi--Serrin}}}} (LPS) type regularity criterion for \eqref{eq:KSE} that was developed in \cite{Larios_Rahman_Yamazaki_2021_JNLS_KSE_PS}. Indeed, in the case $d=2$, \cite[Theorem 3.1]{Larios_Rahman_Yamazaki_2021_JNLS_KSE_PS} implies that if $u\in L^r(0,T;W^{m,p})$, where $m\in(0,1)$, $p\in [1,2/m)$, and $r\in (4/3, 4/(1+m)]$ or else $m=0$ and $p\in(1,\infty]$, $r\in[4/3,4)$, then $u$ is a globally regular solution. Thus, $N_0(\cdotp;\bu_0)\in L^4(0,T^*)$ is simply the assertion that $\bu\in L^2(0,T^*;L^2)$. When $\al\in(0,2)$, \cref{thm:N} therefore provides a non-trivial extension of the LPS regularity criterion that includes negative-Sobolev norms, that is, by allowing for the case $m<0$ (in the notation of \cite{Larios_Rahman_Yamazaki_2021_JNLS_KSE_PS}). Similar results can of course be obtained for the case $d=3$; we restrict our discussion to the $d=2$ for narrative clarity. 

Intuitively speaking, the growth of negative Sobolev norms indicate a growth of energy in low frequencies, and hence, large-scales, since negative Sobolev norms penalize energy in high frequencies. Thus, \cref{thm:div:low} provides yet another alternative perspective to the issue of regularity of \eqref{eq:KSE} that refines the statement that the behavior of the solution on large-scales controls the behavior of the solution on all scales. 
\end{Rmk}

\begin{Rmk}\label{rmk:sharpness}
We point out that the range of $\al$ is almost sharp in the sense that we can almost reach the ``critical threshold'' $\al=2$. Indeed one observes the following critical-type phenomenon when estimating the trilinear interaction in the energy balance:
    \begin{align}\label{est:critical}
        |\lb G_{\bbu}\bu\cdotp\nabla G_{\bbu}\bu,G_{\bbu}\bu\rb|\leq C_L\no{L^2}{\De G_{\bbu}\bu}^2\no{H^{-2}}{G_{\bbu}\bu},
    \end{align}
which can be proved by applying H\"older's inequality, the Ladyzhenskaya inequality, and \eqref{est:interpolation}, where $C_L$ denotes the constant from the Ladyzhenskaya inequality (see also \eqref{est:Ladyzhenskaya}):
    \begin{align}\notag
        \no{L^4}{\ph}^2\leq C\no{L^2}{\nabla\ph}\no{L^2}{\ph}.
    \end{align}
In fact, in the context of $\Om=\RR^2$, one may identify $H^{-2}$ as a scaling-critical norm for the system \eqref{eq:KSE} in the particular case $\lam=0$, which possesses the scaling-symmetry, $\bu_\tht(t,\bx)=\tht^3\bu(\tht^4t,\tht\bx)$, for $\tht>0$. In particular, $\bu_\tht$ is a solution of \eqref{eq:KSE} whenever $\bu$ is a solution and $\no{H^{-2}(\RR^2)}{\bu_\tht}=\no{H^{-2}(\RR^2)}{\bu}$.

In light of these remarks, one may essentially consider our analysis as ``subcritical.'' Reaching the $\al=2$ criticality threshold would yield an optimal improvement of \eqref{thm:div:low}, \eqref{thm:N}. However, the fact that \eqref{def:N} appears to blow up in the limit as $\al\goesto 2$ suggests that there may be some interesting challenges to overcome in reaching this endpoint. Indeed, this endpoint case may be viewed as an analog of the celebrated $L^3$-based endpoint of the LPS-regularity criterion for the 3D {\rone{Navier--Stokes}} equations established by Escauriaza, Seregin, and \v Sver\'ak \cite{Escauriaza_Seregin_Sverak_2003}.

It would therefore be interesting to study whether the $H^{-2}$ norm or other scale-consistent quantity, especially those which are $L^p$--based, can serve as the frequency cut-off characterized by \eqref{def:N} or to study the potential for ill-posedness at this regularity threshold.
\end{Rmk}

\section{Global regularity of the 2D Castrated KSE}\label{sect:castrate}

Motivated by the results of \cref{sect:div:low}, we propose a modification of \eqref{eq:KSE} that inhibits the transfer of energy to large-scales. Indeed, we observe from the energy balance \eqref{eq:energy:balance} that the trilinear interaction term can be expanded as
    \begin{align}
        &-\lb (\bu\cdotp\nabla)\bu,\bu\rb\notag\\
        &=-\lb P_N((\bu\cdotp\nabla)\bu),\bu\rb-\lb (I-P_N)((\bu\cdotp\nabla)\bu), \bu\rb\notag\\
        &=-\lb(\bu_N\cdotp\nabla)\bu_N,\bu_N\rb-\lb(\bu_N\cdotp\nabla)\bu^N,\bu_N\rb-\lb (\bu^N\cdotp\nabla)\bu,\bu_N\rb+II\notag\\
        &=I_a+I_b+I_c+II\notag,
    \end{align}
where
	\begin{align}\notag
		II:=-\lb (I-P_N)((\bu\cdotp\nabla)\bu), \bu\rb.
	\end{align}
We therefore propose the following modification of \eqref{eq:KSE}:
    \begin{align}\label{eq:KSE:castrate}
        \bdy_t\bu&+\De^2\bu+\lam\De\bu\notag
        \\
        &=-((\bu_{N(\bu)}\cdotp\nabla)\bu^{N(\bu)})_{N(\bu)}-((\bu^{N(\bu)}\cdotp\nabla)\bu)_{N(\bu)}-((\bu\cdotp\nabla)\bu)^{N(\bu)},
    \end{align}
where $N(\bu)$ is defined by the functional
    \begin{align}\label{def:Nu}
         N(\bu)=C_*\left(\no{L^2}{\bu}+N_*\right),
    \end{align}
for some suitable number $N_*$ and positive constant $C_*$ such that $C_*N_*\geq {\rtwo{1}}$. Note that this choice of $N_*, C_*$ ensures that $N(\bu)\geq1$. Observe that the first three terms on the right-hand side of the balance are precisely $I_b, I_c$, and $II$. In particular, the mechanism that would have produced large-scale energy from \textit{exclusively} large-scale interactions has been culled in \eqref{eq:KSE:castrate}. For this reason, we refer to \eqref{eq:KSE:castrate} as the \textit{castrated KSE}. We show below that this system is globally regular. 

\begin{Thm}\label{thm:castrate}
Let $\bu_0\in L^2$ and $\lam>0$. There exist $C_*, N_*$ such that for all constants $C\geq C_*$ and $N\geq N_*$ defining \eqref{def:Nu}, a unique solution $\bu\in C_w([0,T];L^2))\cap L^2(0,T;H^2)$ of \eqref{eq:KSE:castrate} corresponding to initial data $\bu(0)=\bu_0$ exists, for all $T>0$. Moreover, for any $t_0>0$ and $k\geq1$, one has $\bu\in C([t_0,T];H^k)$ such that
    \begin{align}\notag
        \sup_{t\in[t_0,T]}\no{H^k}{\bu(t)}\leq C(t_0, T,\no{L^2}{\bu_0})<\infty.
    \end{align}
In particular, $\bu(t)\in C^\infty$, for all $t\in(0,T]$. Lastly, if $\nabla^\perp\cdotp\bu_0=0$, then $\nabla^\perp\cdotp\bu(t)=0$, for all $t>0$.
\end{Thm}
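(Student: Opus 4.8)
The plan is to follow the scheme underlying \cref{thm:exist}---Galerkin approximation, uniform a priori bounds, passage to the limit, and bootstrap of regularity---so that the only genuinely new ingredient needed is a \emph{global-in-time} bound on $\no{L^2}{\bu(t)}$ together with $\int_0^T\no{L^2}{\De\bu(s)}^2\,ds$; this is where the castration is used. A useful preliminary observation is that, regrouping the four Littlewood--Paley pieces of $(\bu\cdotp\nabla)\bu$, the right-hand side of \eqref{eq:KSE:castrate} equals $-(\bu\cdotp\nabla)\bu+P_{N(\bu)}\big((\bu_{N(\bu)}\cdotp\nabla)\bu_{N(\bu)}\big)$: that is, \eqref{eq:KSE:castrate} is precisely \eqref{eq:KSE} with one extra low-mode forcing term, which, when the energy balance \eqref{eq:energy:balance} is formed, contributes exactly $-I_a$, so that the trilinear interaction leaves only $I_b+I_c+II$ behind.

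For the a priori estimate I would first reduce to the mean-free case by passing to the Galilean-shifted solution $G_{\bbu}\bu$, exactly as in the proof of \cref{thm:div:low}, invoking \cref{lem:galilean} and \cref{lem:mean:value} to recover the general case at the end. It then remains to estimate $I_b$, $I_c$, $II$ in \eqref{eq:energy:balance}. The common feature is that each of these terms carries at least one high-frequency factor $\bu^{N}$, which loses a power of $N$ under differentiation by the Bernstein inequalities \eqref{est:bernstein} ($\no{L^2}{\nabla\bu^{N}}\le N^{-1}\no{L^2}{\De\bu}$ and $\no{L^2}{\bu^{N}}\le N^{-2}\no{L^2}{\De\bu}$), while the low-frequency factor $\bu_N$ is handled by Ladyzhenskaya's inequality \eqref{est:Ladyzhenskaya}, $\no{L^4}{\bu_N}^2\le C\no{L^2}{\bu}\no{L^2}{\nabla\bu}\le C\no{L^2}{\bu}^{3/2}\no{L^2}{\De\bu}^{1/2}$. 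Carrying this out gives $|I_b|,|I_c|\le CN^{-1}\no{L^2}{\bu}^{3/2}\no{L^2}{\De\bu}^{3/2}$ and $|II|\le CN^{-2}\no{L^2}{\bu}\no{L^2}{\De\bu}^{2}$. Here the definition \eqref{def:Nu} enters decisively: $N=N(\bu)\ge C_*\no{L^2}{\bu}$ yields $N^{-1}\no{L^2}{\bu}^{3/2}\le C_*^{-1}\no{L^2}{\bu}^{1/2}$, while $C_*N_*\ge1$ yields $N^{-2}\no{L^2}{\bu}\le(4C_*^2N_*)^{-1}\le(4C_*)^{-1}$. Hence, by Young's inequality and for the constant in \eqref{def:Nu} chosen large, $|I_b|+|I_c|+|II|\le\tfrac14\no{L^2}{\De\bu}^2+\tfrac{C}{C_*}\no{L^2}{\bu}^2$; combining this with \eqref{est:destabilizing}, the energy balance collapses to $\tfrac{d}{dt}\no{L^2}{\bu}^2+\no{L^2}{\De\bu}^2\le C(\lam)\no{L^2}{\bu}^2$, and Gr\"onwall delivers the global $L^2$ bound together with $\int_0^T\no{L^2}{\De\bu(s)}^2\,ds<\infty$ for every $T>0$. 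This is the crux: removing $I_a$ is precisely what eliminates the $\de_+^*\no{L^2}{\bu}^2$ term that obstructed \eqref{est:energy:balance:div}.

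Granting this bound, existence in $C_w([0,T];L^2)\cap L^2(0,T;H^2)$, uniqueness, and the smoothing statement $\bu\in C([t_0,T];H^k)$ with $\bu(t)\in C^\infty$ for $t>0$ follow from the arguments of \cref{sect:appendix} essentially verbatim: the extra term $P_{N(\bu)}\big((\bu_{N(\bu)}\cdotp\nabla)\bu_{N(\bu)}\big)$ is frequency-localized below $N(\bu)$, hence smooth of every order with norms controlled by $\no{L^2}{\bu}$, so it is harmless both in the compactness argument and in the higher-order estimates (where the biharmonic dissipation dominates the quadratic nonlinearity in two dimensions, so $H^2$-regularity propagates and improves). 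The one point that deserves a word is that $N(\bu)$ depends on the solution and $\bu\mapsto P_{N(\bu)}$ is only piecewise constant in $\no{L^2}{\bu}$; this is handled in the standard way---e.g.\ by solving the Galerkin ODEs in the Carath\'eodory/Filippov sense, or by using a smooth regularization of the cut-off---neither of which alters the estimates above, since those used only $N\ge C_*\no{L^2}{\bu}$ and $C_*N_*\ge1$.

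For the final assertion I would argue at the Galerkin level, where $\bu^{(m)}$ may be taken curl-free whenever $\bu_0$ is, since $P_m$ commutes with $\nabla^\perp\cdotp$. Applying $\nabla^\perp\cdotp$ to the $m$-th equation and using that all the projections commute with $\nabla^\perp\cdotp$, together with the two-dimensional identity $\nabla^\perp\cdotp\big((\bv\cdotp\nabla)\bv\big)=(\bv\cdotp\nabla)(\nabla^\perp\cdotp\bv)+(\nabla\cdotp\bv)(\nabla^\perp\cdotp\bv)$, one finds that $\om^{(m)}:=\nabla^\perp\cdotp\bu^{(m)}$ satisfies a linear homogeneous equation---with coefficients determined by the already-constructed $\bu^{(m)}$---in the finite-dimensional Galerkin space, with initial datum $\nabla^\perp\cdotp(P_m\bu_0)=0$; hence $\om^{(m)}\equiv0$ by Gr\"onwall, and $\nabla^\perp\cdotp\bu=0$ follows upon passing to the limit, this being a closed condition under the distributional convergence supplied by $C_w([0,T];L^2)$. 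I expect the only genuine obstacle to be the a priori estimate: one must resist bounding $I_b,I_c,II$ crudely---which produces a non-absorbable $\no{L^2}{\bu}^4$---and instead use the high-frequency Bernstein gain and the lower bound $N(\bu)\gtrsim\no{L^2}{\bu}$ \emph{in tandem}, so that the net growth is only quadratic in $\no{L^2}{\bu}$.
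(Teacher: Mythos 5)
Your core mechanism is the paper's: estimate $I_b,I_c,II$ by putting the Bernstein gain \eqref{est:bernstein} on the high-frequency factor $\bu^N$, use the lower bound $N(\bu)\geq C_*\no{L^2}{\bu}$ from \eqref{def:Nu} together with $C_*N_*\geq1$ to turn the losses into absorbable or small terms, close with Gr\"onwall, and run the local theory/bootstrap of \cref{sect:appendix} with a truncated or regularized cut-off. The genuine gap is your very first reduction: you cannot ``pass to the Galilean-shifted solution $G_{\bbu}\bu$ exactly as in the proof of \cref{thm:div:low},'' because \eqref{eq:KSE:castrate} is not invariant under this change of frame. Subtracting the time-dependent constant $\bbu(t)$ alters only the zero mode, which lies in the range of $P_N$; if you compute the equation satisfied by $\bw=G_{\bbu}\bu$, the chain-rule term $\bbu\cdotp\nabla\bu$ is only partially reproduced by the castrated nonlinearity of $\bw$: the high-mode piece $\bbu\cdotp\nabla\bu^{N}$ cancels, but the low-mode piece $\bbu\cdotp\nabla\bu_{N}$ survives as a leftover forcing (precisely because the low-low interaction has been culled), in addition to $-\tfrac{d}{dt}\bbu(t)$, and moreover the cut-off itself changes since $\no{L^2}{G_{\bbu}\bu}\neq\no{L^2}{\bu}$ in \eqref{def:Nu}. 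Hence $G_{\bbu}\bu$ does not solve \eqref{eq:KSE:castrate}, and the mean-free a priori estimate cannot be transferred to the general case; this is not cosmetic, since your estimates really use mean-freeness (e.g.\ the homogeneous Ladyzhenskaya bound $\no{L^4}{\bu_N}^2\leq C\no{L^2}{\bu}\no{L^2}{\nabla\bu}$ fails for fields with nonzero mean --- constants give a counterexample).

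The repair is what the paper does: no change of frame at all. Keep the full solution, form the energy balance (so the mean is part of the quantity $\no{L^2}{\bu}^2$ being estimated), and split $\bu=\tbu+\bbu$ \emph{inside} each of $I_b$, $I_c$, $II$, bounding the $\bbu$-bearing pieces directly with $|\bbu|\leq C\no{L^2}{\bu}$, Bernstein on $\bu^N$, and $N\geq C_*\no{L^2}{\bu}$; neither \cref{lem:galilean} nor \cref{lem:mean:value} is needed. (Equivalently, you could keep your computation but use the inhomogeneous Ladyzhenskaya inequality and absorb the resulting extra $N^{-1}\no{L^2}{\bu}^2\no{L^2}{\De\bu}$-type terms using the same lower bound on $N$.) A smaller repair concerns the curl-free claim: the identity you quote is for the self-interaction $(\bv\cdotp\nabla)\bv$, whereas the castrated nonlinearity contains mixed terms such as $(\bu_N\cdotp\nabla)\bu^N$ whose curl is not of that form; however, using your own regrouping --- the right-hand side of \eqref{eq:KSE:castrate} equals $-(\bu\cdotp\nabla)\bu+P_{N(\bu)}\big((\bu_{N(\bu)}\cdotp\nabla)\bu_{N(\bu)}\big)$ --- and applying the self-interaction identity to each of the two self-interactions (noting $\nabla^\perp\cdotp\bu_N=P_N\om$) does give a vorticity equation that is linear and homogeneous in $\om$, so that part of your argument goes through after this adjustment.
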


\begin{proof}
Due to the fact that \eqref{eq:KSE:castrate} is a reduced version of \eqref{eq:KSE}, the same analysis as the one outlined in \cref{sect:appendix} for \eqref{eq:KSE} can be carried out \textit{mutatis mutandis} for the Galerkin approximation of \eqref{eq:KSE:castrate}. The only technicality that needs to be addressed is the issue of ensuring that the frequency cut-off \eqref{def:Nu} remains well-defined throughout the evolution of the system. For this, we truncate \eqref{def:Nu} by $N^{(n)}(\bu):=N(\bu)\wedge n$, so that $N^{(n)}\leq n$. The same analysis produces a sequence of solutions $\{\bu^{(n)}\}_{n\geq1}$ corresponding to the system defined by the truncated cut-offs $N^{(n)}$, for which all of our apriori estimates hold uniformly in $n$, allowing for passage to the limit (via a diagonalization argument) as $n\goesto\infty$. Thus, under the same hypothesis as \eqref{thm:exist}, one has local existence of smooth solutions to \eqref{eq:KSE:castrate} emanating from initial data $\bu_0\in L^2$. We omit these details and simply refer the reader to \cref{sect:appendix}. In particular, it will again suffice to control $\bu$ in $L^2$. Note that throughout the proof we will suppress the dependence of $N(\bu)$ on $\bu$ for convenience. 

The energy balance of \eqref{eq:KSE:castrate} is given by
	\begin{align}
            \frac{1}2\frac{d}{dt}\no{L^2}{\bu}^2&+\no{L^2}{\De\bu}^2\notag
            \\
            &=-\lb (\bu_{N}\cdotp\nabla)\bu^{N},\bu_N\rb-\lb (\bu^{N}\cdotp\nabla)\bu,\bu_N\rb-\lb (\bu\cdotp\nabla)\bu,\bu^N\rb-\lam\lb\De\bu,\bu\rb\notag
            \\
            &=I_b+I_c+II-\lam\lb\De\bu,\bu\rb.\notag
	\end{align}
Observe that it suffices to treat $I_b$, $I_c$, and $II$ since the fourth term was already treated in \eqref{est:destabilizing}. Denoting again $\tbu:=\bu-\bbu$, it will also be helpful to note that \eqref{def:Nu} implies
	\begin{align}\notag
		N=C_*\left((\no{L^2}{\tbu}^2+|\bbu|^2)^{1/2}+N_*\right).
	\end{align}

For $I_b$, first observe that
	\begin{align}
		I_b=-\lb (\tbu_N\cdotp\nabla)\bu^N,\bu_N\rb-\lb (\bbu\cdotp\nabla)\bu^N,\bu_N\rb=I_b^1+I_b^2\notag.
	\end{align}
{\rone{We}} apply H\"older's inequality, interpolation \eqref{est:interpolation}, the Poincar\'e inequality \eqref{est:bernstein}, \eqref{def:Nu}, and Young's inequality to estimate
    \begin{align}
        |I_b^1|&\leq \no{L^4}{\tbu_N}^2\no{L^2}{\nabla\bu^N}\notag
        \\
        &\leq \frac{C}{N}\no{H^1}{\tbu_N}\no{L^2}{\tbu_N}\no{L^2}{\De\bu^N}\notag
        \\
        &\leq \frac{C}{N}\no{L^2}{\De\tbu}^{1/2}\no{L^2}{\tbu}^{1/2}\no{L^2}{\tbu_N}\no{L^2}{\De\bu^N}\notag
        \\
        &\leq \frac{1}{12}\no{L^2}{\De\bu}^2+\frac{C}{N^4}\no{L^2}{\tbu_N}^4\no{L^2}{\bu}^2\notag
        \\
        &\leq \frac{1}{12}\no{L^2}{\De\bu}^2+\frac{C}{C_*^4}\no{L^2}{\bu}^2,\notag
    \end{align}
for some universal constant $C>0$. Also
    \begin{align}
        |I_b^2|&\leq |\bbu|\no{L^2}{\nabla\bu^N}\no{L^2}{\bu_N}\notag
        \\
        &\leq \frac{|\bbu|}{N}\no{L^2}{\De\bu^N}\no{L^2}{\bu_N}\notag
        \\
        &\leq \frac{1}{12}\no{L^2}{\De\bu}^2+\frac{C}{4\pi^2N^2}\no{L^2}{\bu_N}^2\no{L^2}{\bu}^2\notag
        \\
         &\leq\frac{1}{12}\no{L^2}{\De\bu}^2+\frac{C}{C_*^2}\no{L^2}{\bu}^2.\notag
    \end{align}
      
To treat $I_c$ we first observe that
	\begin{align}
		I_c=-\lb (\bu^N\cdotp\nabla)\tbu, \tbu_N\rb-\lb (\bu^N\cdotp\nabla)\tbu,\bbu\rb=I_c^1+I_c^2\notag.
	\end{align}
We then estimate $I_c$ similarly to $I_b$. In particular, we have
    \begin{align}
        |I_c^1|&\leq \no{L^2}{\bu^N}\no{L^4}{\nabla\tbu}\no{L^4}{\tbu_N}\notag
        \\
        &\leq \frac{C}{N^2}\no{L^2}{\De\bu^N}\no{L^2}{\De\tbu}^{3/4}\no{L^2}{\tbu}^{1/4}\no{L^2}{\nabla\tbu_N}^{1/2}\no{L^2}{\tbu_N}^{1/2}\notag
        \\
        &\leq \frac{C}{N^{3/2}}\no{L^2}{\De\bu}^{7/4}\no{L^2}{\tbu}^{1/4}\no{L^2}{\tbu_N}\notag
        \\
        &\leq \frac{1}{12}\no{L^2}{\De\bu}^2+\frac{C}{N^{12}}\no{L^2}{\tbu_N}^8\no{L^2}{\tbu}^2\notag
        \\
        &\leq \frac{1}{12}\no{L^2}{\De\bu}^2+\frac{C}{C_*^{12}N_*^4}\no{L^2}{\bu}^2,\notag
    \end{align}
for some universal constant $C>0$, where we invoked \eqref{def:N} in obtaining the final inequality. We also have
    \begin{align}
        |I_c^2|&\leq \no{L^2}{\bu^N}\no{L^2}{\nabla\tbu}|\bbu|\notag
        \\
        &\leq \frac{C}{4\pi^2N^2}\no{L^2}{\De\bu^N}\no{L^2}{\De\tbu}^{1/2}\no{L^2}{\tbu}^{1/2}\no{L^2}{\bu}\notag
        \\
        &\leq \frac{C}{N^2}\no{L^2}{\De\bu}^{3/2}\no{L^2}{\bu}^{3/2}\notag
        \\
  	&\leq \frac{1}{12}\no{L^2}{\De\bu}^2+\frac{C}{N^8}\no{L^2}{\bu}^6,\notag
        \\
	&\leq \frac{1}{12}\no{L^2}{\De\bu}^2+\frac{C}{C_*^{8}N_*^2}\no{L^2}{\bu}^2.\notag
    \end{align}

Lastly, we estimate $II$. As before, let us first observe that
	\begin{align}
		II=-\lb(\tbu\cdotp\nabla)\tbu,\bu^N\rb-\lb (\bbu\cdotp\nabla)\tbu,\bu^N\rb=II^1+II^2.\notag
	\end{align}
Observe that upon applying H\"older's inequality, interpolation, the inverse Poincar\'e inequality, and the Cauchy-Schwarz inequality, we obtain
    \begin{align}
        |II^1|&\leq \no{L^4}{\tbu}\no{L^4}{\nabla\tbu}\no{L^2}{\bu^N}\notag
        \\
        &\leq C\no{L^2}{\nabla\tbu}\no{L^2}{\tbu}^{1/2}\no{L^2}{\De\tbu}^{1/2}\no{L^2}{\bu^N}\notag
        \\
        &\leq \frac{C}{N^2}\no{L^2}{\De\bu}\no{L^2}{\De\tbu}\no{L^2}{\tbu}\notag
        \\
        &\leq \frac{C}{C_*^2N_*}\no{L^2}{\De\bu}^2,\notag
    \end{align}
for some universal constant $C>0$. Similarly
    \begin{align}
        |II^2|&\leq |\bbu|\no{L^2}{\nabla\tbu}\no{L^2}{\bu^N}\notag
        \\
        &\leq \frac{C}{2\pi N^2}\no{L^2}{\bu}\no{L^2}{\De\tbu}^{1/2}\no{L^2}{\tbu}^{1/2}\no{L^2}{\De\bu^N}\notag
        \\
        &\leq \frac{C}{N^2}\no{L^2}{\De\bu}^{3/2}\no{L^2}{\bu}\no{L^2}{\tbu}^{1/2}\notag
        \\
        &\leq \frac{1}{12}\no{L^2}{\De\bu}^2+\frac{C}{N^8}\no{L^2}{\bu}^4\no{L^2}{\tbu}^2\notag
        \\
        &\leq \frac{1}{12}\no{L^2}{\De\bu}^2+\frac{C}{C_*^8N_*^2}\no{L^2}{\bu}^2\notag,
    \end{align}
for some universal constant $C>0$.
    
Let $C_0$ denote the maximum over all constants $C$ appearing in the estimates above. We may then choose any positive constants $C_*, N_*$ satisfying
    \[
            \frac{C_0}{C_*^2N_*}\leq\frac{1}{12}\quad\text{and}\quad \frac{C_0}{C_*^2}\left(1+\frac{1}{C_*^2}+\frac{1}{C_*^6N_*^2}+\frac{1}{C_*^{10}N_*^4}\right)\leq \frac{1}2(\lam\vee1)^2.
    \]
Then upon combining the above estimates and adding $\no{L^2}{\bu}^2$ to both sides of the resulting inequality, we arrive
	\begin{align}\notag
		\frac{d}{dt}\no{L^2}{\bu}^2+\no{H^2}{\bu}^2+\leq 4(\lam\vee1)^2\no{L^2}{\bu}^2.
	\end{align}
An application of Gr\"onwall's inequality then yields
	\begin{align}\notag
		\no{L^2}{\bu(t)}^2+\int_0^te^{4(\lam\vee1)^2(t-s)}\no{H^2}{\bu(s)}^2\ ds\leq e^{4(\lam\vee1)^2t}\no{L^2}{\bu_0}^2,
	\end{align}
and we are done.
\end{proof}

\begin{Rmk}\label{rmk:restrict}
On the other hand, we may also control the transfer of energy produced via nonlinear interaction by removing \textit{both} the production of small-scale energy through nonlinear interaction in addition to the large-scale energy produced from large-scale interactions. In particular, one may consider what may be referred to as a ``cascade-restricted'' KSE-type system:
     \begin{align}\label{eq:KSE:restrict}
       \bdy_t\bu+((\bu_{N(\bu)}\cdotp\nabla)\bu^{N(\bu)})_{N(\bu)}+((\bu^{N(\bu)}\cdotp\nabla)\bu)_{N(\bu)}=-\De^2\bu-{\rtwo{\lam}}\De\bu,
       \end{align}
When $\lam=0$, one may redefine $N(\bu)=C_*\left(\no{L^2}{P_N\bu}+N_*\right)$, with similar defining constants $C_*, N_*$ as in \cref{thm:div:low}. In this case, \eqref{eq:KSE:restrict} becomes a cascade-controlled hyperviscous Burgers-type equation. The above analysis then implies the existence of a finite-dimensional global attractor. It would be interesting to study numerically whether the long-time average of the energy for this system is an intensive quantity (see the discussion in  \cref{sect:intro}).
\end{Rmk}  

\begin{Rmk}\label{rmk:control}
We observe that the system \eqref{eq:KSE:castrate} can be viewed as a controlled-KSE system. In particular, \eqref{eq:KSE:castrate} can be written as
    \begin{align}\notag
        \bdy_t\bu+\bu\cdotp\nabla\bu=-\De^2\bu-\lam\De\bu+\bbf(\bu;N),
    \end{align}
where $\bbf(\bu;N)$ is given by
    \begin{align}
        \bbf(\bu;N)=P_{N(\bu)}(\bu_{N(\bu)}\cdotp\nabla \bu_{N(\bu)}).\notag
    \end{align}
From this point of view, the control $\bbf(\bu;N)$ systematically removes large-scale/large-scale interactions that contribute directly to large-scale motions as the KSE solution evolves. An alternative approach to understanding the issue of global regularity could therefore be to study the extent to which the low-mode/low-mode interactions can {\rtwo{be}} systematically added to \eqref{eq:KSE:castrate} and retain the property of global well-posedness.
\end{Rmk}

\begin{Rmk}\label{rmk:Numerical}
At first glace, system \eqref{eq:KSE:castrate} may appear fairly complicated from the perspective of numerical simulations, since the ``cut-off'' wave number $N(\bu)$ depends on the solution, but by treating the system semi-implicitly (to avoid the stiffness of the linear terms), so that $N(\bu)$ depends on the previous time-step, a reasonable numerical scheme can be devised.  For example, for simple semi-implicit Euler time-stepping, one could consider (with $\bu^k\approx\bu(t_k)$, time-step $h>0$, and some appropriate spatial discretization):
    \begin{align}\label{eq:KSE:castrate_Euler}
        &\quad
        (1+h\De^2+h\lam\De)\bu^{k+1}
        \\ \notag
        &=\bu^k-h((\bu^k_{N(\bu^k)}\cdotp\nabla)(\bu^k)^{N(\bu^k)})_{N(\bu^k)}
        -h(((\bu^k)^{N(\bu^k)}\cdotp\nabla)\bu^k)_{N(\bu^k)}
        -h((\bu^k\cdotp\nabla)\bu^k)^{N(\bu^k)},
    \end{align}
which is straight-forward to implement.
We do not comment here on the stability or consistency of scheme \eqref{eq:KSE:castrate_Euler}, but we plan to study simulations of \eqref{eq:KSE:castrate} in a future work.
\end{Rmk}

\section{Global regularity of the 2D {\rone{Burgers--Sivashinsky}} equation}\label{sect:BSE}

In the final section of the paper, we apply \cref{thm:div} to show that solutions to the so-called 2D ``{\rone{Burgers--Sivashinsky}}'' equation (see, e.g., \cite{Goodman_1994,Rakib_Sivashinsky_1987}) are globally regular. \add{Although this result is known (see, e.g., \cite{Molinet_2000, Molinet_2000_2D_BS}), the proof we provide is new and a straightforward consequence of \cref{thm:div}, {\add{which the reader can verify is also valid for the Burgers--Sivashinsky equation.}}}

Recall that the curl-free {\rone{Burgers--Sivashinsky}} equation is given by
    \begin{align}\label{eq:BSE}
        \bdy_t\bu+\bu\cdotp\nabla\bu=\De\bu+\lam\bu,\quad \nabla^\perp\cdotp\bu=0.
    \end{align}
Note that \eqref{eq:BSE} still possesses a Galilean-type invariance. Indeed, upon letting $\bw=e^{\add{-}\lam t}\bu$, we see that
	\begin{align}\label{eq:BSE:exp}
		\bdy_t\bw+e^{\lam t}\bw\cdotp\nabla\bw=\De\bw.
	\end{align}
Thus, for $G^\lam_\bv\bw=\bw(t,\bx+\int_0^te^{\lam s}\bv(s)ds)-\bv(t)$, if $\bw$ satisfies \eqref{eq:BSE:exp} with initial data $\bw_0$, then $G^\lam_{\add{\bar{\bw}}}\bw$ satisfies \eqref{eq:BSE:exp} with \add{mean-}zero initial data, \add{where $\bar{\bw}(t)=|\Om|^{-1}\int_{\Om}\bw(t,\bx)d\bx$}. \add{We remark that analysis similar to that performed in the previous sections may then still be applied to obtain similar results for the Burgers--Sivashinsky equation. However, in what follows, only the analog of \cref{thm:div} is needed.}

\begin{Thm}
Given a smooth vector field $\bu_0$ such that $\nabla^\perp\cdotp\bu_0=0$, there exists a unique solution $\bu\in C([0,T];L^2))\cap L^2(0,T;H^1)$ of \eqref{eq:Burgers:div} with initial data $\bu(0)=\bu_0$, for all $T>0$, such that $\bu(t)\in C^\infty$, for all $t\in[0,T]$.
\end{Thm}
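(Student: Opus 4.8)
The plan is to verify hypothesis \eqref{cond:div} of \cref{thm:div} --- in the form valid for \eqref{eq:BSE} --- by proving a pointwise bound on $\de_+^*$ via a maximum principle for the scalar divergence field. First I would record the \eqref{eq:BSE}-analog of \cref{thm:exist}: since \eqref{eq:BSE} is a semilinear heat equation with a quadratic nonlinearity, local well-posedness in $L^2$, instantaneous smoothing, persistence of smoothness for smooth data on the maximal interval $[0,T^*)$, and the continuation criterion ``$\sup_{[0,T^*)}\no{L^2}{\bu}<\infty$ implies $T^*=\infty$'' all follow from the Galerkin/energy scheme of \cref{sect:appendix} (see also \cite{Molinet_2000,Molinet_2000_2D_BS}); the curl-free property is preserved since $\om=\nabla^\perp\cdotp\bu$ then solves the linear homogeneous equation $D_t\om=\De\om+(\lam-\de)\om$, analogous to \eqref{eq:Burgers:vort}. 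I would likewise note that the proof of \cref{thm:div} carries over to \eqref{eq:BSE} with only the obvious change: the destabilizing term $-\lam\lb\De\bu,\bu\rb$ in \eqref{eq:energy:balance} is replaced by the harmless $+\lam\no{L^2}{\bu}^2$, so that the conclusion of \eqref{est:L2:div} holds with $\lam^2$ replaced by $2\lam$, and it therefore suffices to bound $\int_0^{T^*}\de_+^*(s)\,ds$.

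The structural heart is the evolution equation for $\de=\nabla\cdotp\bu$. Applying $\nabla\cdotp$ to \eqref{eq:BSE} and using the curl-free identity $\nabla\cdotp(\bu\cdotp\nabla\bu)=|\nabla\bu|^2+\bu\cdotp\nabla\de$, one obtains
\begin{align}\notag
	D_t\de=\De\de+\lam\de-|\nabla\bu|^2\leq\De\de+\lam\de,\qquad D_t=\bdy_t+\bu\cdotp\nabla.
\end{align}
Thus $\de$ satisfies a scalar parabolic differential inequality with only linear forcing, the quadratic term carrying the favorable sign --- exactly the stabilizing mechanism highlighted in \cref{sect:Burgers}.

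With this in hand I would run the maximum principle: for smooth $\bu_0$ the divergence $\de$ is smooth on $[0,T^*)$, so $h(t):=\max_{x\in\Om}\de(t,x)$ is locally Lipschitz and, evaluating the differential inequality at a maximizing point $x_t$ (where $\nabla\de=0$, $\De\de\leq0$, and the transport term vanishes), one gets $h'(t)\leq\lam h(t)$ for a.e. $t$; hence $h(t)\leq e^{\lam t}h(0)$, and consequently $\de_+^*(t)=\max\{0,h(t)\}\leq e^{\lam t}\no{L^\infty}{(\nabla\cdotp\bu_0)_+}$. Then $\int_0^T\de_+^*(s)\,ds\leq\lam^{-1}(e^{\lam T}-1)\no{L^\infty}{(\nabla\cdotp\bu_0)_+}<\infty$ for every finite $T$, so by (the \eqref{eq:BSE}-version of) \cref{thm:div} we get $\sup_{[0,T^*)}\no{L^2}{\bu}<\infty$ and therefore $T^*=\infty$; the asserted regularity $\bu\in C([0,T];L^2)\cap L^2(0,T;H^1)$ with $\bu(t)\in C^\infty$ on $[0,T]$ then follows from the local theory together with parabolic smoothing and the smoothness of $\bu_0$.

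The routine ingredients here are the local existence/continuation theory and the energy estimate; the one step deserving a little care is the maximum-principle step --- justifying the differentiation of $t\mapsto\max_x\de(t,\cdot)$ and handling the possible motion of the maximizing point (via the standard envelope/Danskin argument) --- but this is entirely standard once smoothness of $\de$ on $[0,T^*)$ is available, and there is no genuine obstacle. The whole weight of the result rests on the sign of $-|\nabla\bu|^2$: on \eqref{eq:BSE} the linear part is mild enough that this negative term is never overwhelmed, unlike for \eqref{eq:KSE} where the $-\lam\De\bu$ term destabilizes the large scales.
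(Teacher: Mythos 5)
Your proposal is correct and follows essentially the same route as the paper: apply the divergence operator to \eqref{eq:BSE}, discard the sign-favorable term $-|\nabla\bu|^2$, evaluate at a spatial maximum to obtain $\frac{d}{dt}\de^*\leq\lam\de^*$, integrate to bound $\de_+^*$ exponentially in time, and conclude via the Burgers--Sivashinsky version of \cref{thm:div}. The extra care you take in justifying the differentiation of $t\mapsto\max_x\de(t,x)$ is exactly what the paper defers to \cite{ConstantinTarfuleaVicol2015}.
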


\begin{proof}
Let $\bu$ be the unique local-in-time solution of \eqref{eq:BSE} corresponding to initial data $\bu_0$ so that $\de=\nabla\cdotp\bu$ is also smooth. Similar to \eqref{eq:Burgers:div}, the scalar divergence satisfies 
    \begin{align}\label{eq:BSE:div}
        \bdy_t\de+\bu\cdotp\nabla\de=-|\nabla\bu|^2+\De\de+\lam\de
    \end{align}
Suppose that $\de^*(t)=\max_{x\in\Om}\de(t,x)=\de(t,x^*(t))$. {\rtwo{For $t>0$, observe that $\nabla\de|_{(t,x^*(t))}=0$ and that $\De\de|_{(t,x^*(t))}\leq0$. Then upon evaluating \eqref{eq:BSE:div} at $x^*(t)$ we obtain}}
    \begin{align}\label{est:div:star}
        \frac{d}{dt}\de^*\leq \lam\de^*.
    \end{align}
Hence $\de^*(t)\leq e^{\lam t}\de^*(0)$. {\rtwo{Since $\de^*(0)\leq(\de^*(0))_+$ and $\de^*_+(t)=0$, if $\de^*(t)\leq0$, else $\de^*(t)=\de^*_+(t)$, it follows that}} 
$\de^*_+(t)\leq e^{\lam t}(\de^*(0))_+$, 
which verifies \eqref{cond:div}.
\end{proof}

\begin{Rmk}
\rtwo{For a complete justification of interchanging the maximum operator with the time-derivative in obtaining \eqref{est:div:star}, we refer the reader to \cite[Appendix B]{ConstantinTarfuleaVicol2015}.}
\end{Rmk}

\begin{Rmk}
\add{It is shown in \cite{Molinet_2000} that an apriori bound for \eqref{eq:BSE} of a quantity stronger than $\|\de_+(t)\|_{L^\infty}$ can be obtained, namely, for the quantity $\al_p(t):=\|(\bdy_1u^1(t))_+\|_{L^p}^p+\|(\bdy_2u^2(t))_+\|_{L^p}^p$, for all $p\geq3$. This quantity is, in turn, used to obtain an control $\|\de_+(t)\|_{L^2}$, and therefore deduce global regularity. An apriori estimate for $\|\de(t)\|_{L^\infty}$, then immediately follows. However, \cite{Molinet_2000, Molinet_2000_2D_BS} achieve much more with their analysis and ultimately, sharp estimates on the absorbing ball in $L^2$ are obtained.}
\end{Rmk}

\section{Conclusions}\label{sect:conclude}

To summarize, we establish the primacy of the divergence, particularly the low-mode behavior of the positive part of the divergence, in determining global regularity of solutions to the Kuramoto--Sivashinsky equation, \eqref{eq:KSE}, in dimension $d=2$ (\cref{thm:div}, \cref{thm:div:low}) This observation is motivated from the monotonicity of the divergence in the 2D curl-free Burgers equation, \eqref{eq:Burgers} in Lagrangian coordinates. From this point of view, our results suggest that an analysis of the interplay between the stabilizing and de-stabilizing mechanisms present in \eqref{eq:KSE} should also account for the sign of the divergence.

The cut-off frequency present in our divergence-based regularity criterion provides a unified approach to regularity in the spirit \cite{Cheskidov_Shvydkoy_2014}. This approach allows for more expansive regularity criterion that complements existing results in the literature (\cref{thm:N}), specifically the rather comprehensive study \cite{Larios_Rahman_Yamazaki_2021_JNLS_KSE_PS}. In particular, we identify the role of negative Sobolev norms as a type of \textit{critical quantity} that warrants further investigation (see \cref{rmk:LPS} and \cref{rmk:sharpness}).

Our study of the low-mode behavior of the divergence further inspires a modification to \eqref{eq:KSE}, namely, the castrated KSE \eqref{eq:KSE:castrate}. The castrated KSE removes large-scale interactions that contribute to amplification of large-scale energy, but retains all other interactions. Provided that a sufficiently large number of these interactions are removed, we are able to establish global regularity of solutions. It would be interesting to study the long-time behavior of \eqref{eq:KSE:castrate}, investigate whether or not it is exhibits chaotic dynamics, as well as study how it compares to the original 2D KSE model \eqref{eq:KSE}. However, since we do not establish a uniform-in-time bound of the energy for this system, we propose a further modification of \eqref{eq:KSE:castrate} that possesss and $L^2$ absorbing ball, which we refer to as the cascade-restricted KSE \eqref{eq:KSE:restrict}. This further modified KSE may be closer to the 1D KSE and potentially frutiful ground for studying whether or not the long-time averaged energy is an \textit{intensive quantity} in the sense described in the introduction.  We believe these issues are interesting and deserve further investigation, both analytically and computationally, for example using a scheme in the spirit of the scheme we proposed in Remark \ref{rmk:Numerical}. 

Lastly, as an application our regularity criterion, we supply an efficient proof of the global regularity of the 2D Burgers-Shivashinsky model \eqref{eq:BSE}. It is our hope that our regularity criteria may also shed light  on the issue of global regularity for the 2D Michelson-Shivashinsky model, whose vector form is given by
    \begin{align}\label{eq:MSE}
        \bdy_t\bu+\bu\cdotp\nabla\bu=\De\bu+\lam(-\De)^{1/2}\bu.
    \end{align}
In this direction, we specifically refer the reader to a recent result of H. Ibdah \cite{Ibdah_2021_MichelsonSivashinsky}, wherein it is shown that a modified \eqref{eq:MSE} is shown to be globally regular in any dimension.

\subsection*{Data Availability} We do not analyse or generate any datasets, because our work proceeds within a theoretical and mathematical approach. One can obtain the relevant materials from the references below.

\subsection*{Ethics Declarations} 
On behalf of all authors, the corresponding author states that there is no conflict of interest.

\subsection*{Acknowledgments} We would like to thank the anonymous reviewers for their insightful comments and suggestions, which have improved the manuscript. We would also like to thank David Ambrose, Theo Drivas, and Aseel Farhat for stimulating discussions related to this work. A.L. was supported in part by NSF grants DMS-2206762 and CMMI-1953346, and USGS Grant No. G23AC00156-01. V.R.M. was in part supported by the National Science Foundation through DMS 2213363 and DMS 2206491, as well as the Dolciani Halloran Foundation.

\appendix

\section{Local Existence of solutions}\label{sect:appendix}

We provide the relevant energy estimates that ultimately imply local existence and uniqueness of strong solutions. We proceed in a formal fashion and remark that a rigorous argument can be made by carrying out the energy estimates for the corresponding Galerkin approximation; one may observe that the estimates performed below will yield estimates uniform in the dimension of the Galerkin system.

It will be convenient to have the following interpolation inequalities on hand: 
	\begin{align}
		\no{L^4}{\bv}^2&\leq C\no{H^1}{\bv}\no{L^2}{\bv},\label{est:Ladyzhenskaya}\\
		\no{L^\infty}{\bdy^\al\bv}
		&\leq C\no{L^2}{\De\bdy^\al\bv}^{\frac{2|\al|+2}{2(|\al|+2)}}\no{L^2}{\bv}^{\frac{1}{|\al|+2}}\label{est:Agmon},
	\end{align}
where $\al$ is any multi-index such that $|\al|\geq1$.

\begin{proof}[Proof sketch of \cref{thm:exist}]
First, we establish estimates in $L^2$. We recall the energy balance \eqref{eq:energy:balance} from the proof of \cref{thm:div}. We alternatively estimate the trilinear term using H\"older's inequality, \eqref{est:interpolation}, and \eqref{est:Ladyzhenskaya}:
    \begin{align}
        -\lb \bu\cdotp\nabla\bu,\bu\rb&\leq C\no{H^1}{\bu}^2\no{L^2}{\bu}\notag
        \\
        &\leq C(\no{L^2}{\nabla\bu}^2+\no{L^2}{\bu}^2)\no{L^2}{\bu}\notag
        {\rone{= C\no{L^2}{\nabla\bu}^2\no{L^2}{\bu}+C\no{L^2}{\bu}^3}}\notag
        \\
        &\leq C\no{L^2}{\De\bu}\no{L^2}{\bu}^2+C\no{L^2}{\bu}^3\notag
        \\
        &\leq \frac{1}4\no{L^2}{\De\bu}^2+C(1+\no{L^2}{\bu}^2)\no{L^2}{\bu}^2.\notag
    \end{align}
Combining this with \eqref{est:destabilizing}, then adding $\frac{1}4\no{L^2}{\bu}^2$ to both sides yields
    \begin{align}
        \frac{d}{dt}\no{L^2}{\bu}^2+\frac{1}4\no{H^2}{\bu}^2\leq C(1+\lam+\no{L^2}{\bu}^2)\no{L^2}{\bu}^2.\notag
    \end{align}
It then follows from Gr\"onwall's inequality that there exists $T^*>0$ such that for all $T<T^*$, $\bu$ satisfies
	\begin{align}\label{est:L2:local}
		\sup_{0\leq t\leq T}\left(\no{L^2}{\bu(t)}^2+\int_0^{t}\no{H^2}{\bu(s)}^2\ ds\right)\leq C_*(T,\no{L^2}{\bu_0}),
	\end{align}
for all $t\in[0,T]$, where $\lim_{T\goesto T^*-}C_*(T,\no{L^2}{\bu_0})=\infty$. In particular, $\bu\in L^2(0,T;H^2)$, for all $T<T^*$, so that $\bu(t)\in H^2$ for a.e. $t\in(0,T^*)$. We moreover estimate
    \begin{align}
        \int_0^T\no{H^{-2}}{\frac{d\bu}{dt}(t)}^2dt
        &\leq C\int_0^T\no{H^2}{\bu(t)}^2dt+C\int_0^T\no{L^2}{\bu(t)\cdotp\nabla\bu(t)}^2dt+C\lam^2\int_0^T\no{L^2}{\bu(t)}^2dt\notag
        \\
        &\leq C\int_0^T\no{H^2}{\bu(t)}^2dt+C\int_0^T\no{H^2}{\bu(t)}^2\no{L^2}{\bu(t)}^2+C\lam^2\int_0^T\no{L^2}{\bu(t)}^2dt\notag
        \\
        &\leq C\left[1+\left(\sup_{t\in[0,T]}\no{L^2}{\bu(t)}\right)^2\right]\int_0^T\no{H^2}{\bu(t)}^2dt+C\lam^2\int_0^T\no{L^2}{\bu(t)}^2dt.\notag
    \end{align}
Hence $\frac{d\bu}{dt}\in L^2(0,T;H^{-2})$.

We now deduce $H^k$ estimates for \eqref{eq:KSE}, for any $k\geq2$. 
Let $\al\in(\NN\cup\{0\})^k$, where $k\geq1$, and $|\al|=k$. The corresponding $H^k$ balance is obtained by taking the $L^2$ inner produce of \eqref{eq:KSE} with $\bdy^{2\al}\bu$, and then sum over all $|\al|=k$. Indeed, we obtain
	\begin{align}\notag
		\frac{1}2\frac{d}{dt}\no{H^k}{\bu}^2+\no{H^k}{\De\bu}^2=-(-1)^k\sum_{|\al|=k}\lb \bdy^\al(\bu\cdotp\nabla)\bu,\bdy^\al\bu\rb-(-1)^k\lam\sum_{|\al|=k}\lb\bdy^{\al}\De\bu,\bdy^\al\bu\rb
	\end{align}
Observe that
	\begin{align}
		&\lb\bdy^\al(\bu\cdotp\nabla\bu),\bdy^{\al}\bu\rb\notag\\
		&=\sum_{\de+\be=\al}c_{\de,\be}\lb\bdy^{\de}\bu^j\bdy_j\bdy^{\be}\bu^\ell\bdy^\al\bu^\ell\rb\notag\\
		&=c_{\al,0}\lb \bdy^{\al}\bu^j\bdy_j\bu^\ell,\bdy^\al\bu^\ell\rb+c_{0,\al}\lb\bu^j\bdy_j\bdy^\al\bu^\ell\bdy^\al\bu^\ell\rb+\sum_{0<|\de|,|\be|<k}c_{\de,\be}\lb \bdy^\de\bu^j, \bdy_j\bdy^\be\bu^\ell\bdy^\al\bu^\ell\rb.\notag
	\end{align}
 Since the derivatives are mean-free due to the periodic boundary conditions, by interpolation, \eqref{est:Agmon}, and Young's inequality, one has
	\begin{align}
		|\lb \bdy^{\al}\bu^j\bdy_j\bu^\ell,\bdy^\al\bu^\ell\rb|&=\no{L^2}{\bdy^\al\bu}^2\no{L^\infty}{\nabla\bu}\notag\\
		&\leq C\no{L^2}{\De\bdy^\al\bu}^{\frac{2k}{k+2}}\no{L^2}{\bu}^{\frac{4}{k+2}}\no{L^2}{\De\nabla\bu}^{1/2}\no{L^2}{\nabla\bu}^{1/2}\notag\\
		&\leq C\no{L^2}{\De\bdy^\al\bu}^{\frac{2(k+1)}{k+2}}\no{L^2}{\bu}^{\frac{k+4}{k+2}}\notag\\
		&\leq \frac{1}8\no{L^2}{\De\bdy^\al\bu}^2+C\no{L^2}{\bu}^{\frac{k+4}{k+1}}.\notag
	\end{align}
Also, upon integrating by parts and estimating as before, we see that
	\begin{align}
		|\lb\bu^j\bdy_j\bdy^\al\bu^\ell\bdy^\al\bu^\ell\rb|&=|\lb \de,|\bdy^\al\bu|^2\rb|\notag\\
		&\leq \no{L^\infty}{\nabla\bu}\no{L^2}{\bdy^\al\bu}^2\notag\\
		&\leq \frac{1}8\no{L^2}{\De\bdy^\al\bu}^2+C\no{L^2}{\bu}^{\frac{k+4}{k+1}}.\notag
	\end{align}
Lastly
	\begin{align}
		|\lb \bdy^\de\bu^j, \bdy_j\bdy^\be\bu^\ell\bdy^\al\bu^\ell\rb|&\leq \no{L^2}{\bdy^\de\bu}\no{L^2}{\nabla\bdy^\be\bu}\no{L^\infty}{\bdy^\al\bu}\notag\\
		&\leq C\no{L^2}{\De\bdy^\al\bu}^{\frac{2(k+1)}{k+2}}\no{L^2}{\bu}^{\frac{k+4}{k+2}}.\notag
	\end{align}
An application of Young's inequality again yields
	\begin{align}
		\sum_{0<|\de|,|\be|<k}c_{\de,\be}|\lb \bdy^\de\bu^j, \bdy_j\bdy^\be\bu^\ell\bdy^\al\bu^\ell\rb|\leq \frac{1}8\no{L^2}{\De\bdy^\al\bu}^2+C\no{L^2}{\bu}^{\frac{k+4}{k+1}}.\notag
	\end{align}
We are left to treat one more term:
	\begin{align}
		\lam|\lb \De\bu,\bdy^{2\al}\bu\rb|&\leq \lam\no{L^2}{\De\bdy^\al\bu}\no{L^2}{\bdy^\al\bu}\notag\\
		&\leq \frac{1}8\no{L^2}{\De\bdy^\al\bu}^2+C\lam^2\no{L^2}{\bdy^\al\bu}^2.\notag
	\end{align}
Finally, combining the above and adding $\no{L^2}{\bu}^2$ to both sides of the resulting inequality, we arrive at
	\begin{align}\notag
		\frac{d}{dt}\no{H^k}{\bu}^2+\no{H^{k+2}}{\bu}^2\leq C\lam^2\no{H^k}{\bu}^2+C(1+\no{L^2}{\bu}^2)^{\frac{k+4}{2(k+1)}}.
	\end{align}
By \eqref{est:L2} and an application of Gr\"onwall's inequality, it follows that 
	\begin{align}\notag
		\sup_{t\in[0,T]}\left(\no{H^k}{\bu(t)}^2+\int_0^t\no{H^{k+2}}{\bu(s)}^2\ ds\right)\leq e^{C\lam^2T}C_*(T,\no{L^2}{\bu_0})^{\frac{k+4}{k+1}},
	\end{align}
for all $T<T^*$, where $C_*$ is the same constant from \eqref{est:L2:local}, and $T^*$ is the same existence time appearing there. 

Finally, we see that we may bootstrap from the fact that $\bu(t)\in H^2$ a.e. $t\in(0,T)$. For such $t_0\in(0,T)$, we may then show $\bu\in C_w([t_0,T];H^2)$, which implies $\bu(t)\in H^2$ for all $t\in [t_0,T)$. Since this holds for a sequence of $t_0\goesto0$, it follows that $\bu(t)\in H^2$, for all $t\in(0,T)$. The above estimates then imply that $\bu\in L^2(0,T;H^{2k})$, where $k=2$. We may now induct on $k$ to deduce $\bu(t)\in H^k$, for all $t\in(0,T)$, for all $k\geq1$.
\end{proof}



\vfill
\begin{minipage}[t]{0.5\textwidth}
\noindent Adam Larios\\
{\footnotesize
Department of Mathematics\\
University of Nebraska--Lincoln \\
\url{https://www.math.unl.edu/~alarios2}\\
\url{alarios@unl.edu}
}
\end{minipage}
\begin{minipage}[t]{0.5\textwidth}
\noindent Vincent R. Martinez$^\dagger$\\
{\footnotesize
Department of Mathematics \& Statistics\\
CUNY Hunter College \\
Department of Mathematics \\
CUNY Graduate Center \\
\url{http://math.hunter.cuny.edu/vmartine/}\\
\url{vrmartinez@hunter.cuny.edu}\\
$\dagger$ corresponding author
}
\end{minipage}

\end{document}